\newtheorem{thm}{Theorem}[section]
\newtheorem{lem}[thm]{Lemma}
\newtheorem{pro}[thm]{Proposition}
\newtheorem{cor}[thm]{Corollary}
\theoremstyle{definition}
\newtheorem{den}[thm]{Definition}
\newtheorem{example}[thm]{Example}
\theoremstyle{remark}
\numberwithin{equation}{section}
\begin{document}
\title[  module tensorizing maps  ] {  module tensorizing  maps on $ C^*$-algebras }
\author[ A. Shirinkalam ]{Ahmad Shirinkalam}

\subjclass[2010]{Primary 46L06; Secondary 46H25}

\keywords{module tensor product, module tensorizing map, module nuclear $ C^*$-algebra, module exact $ C^*$-algebra, exact semigroup}

\address{Department of Mathematics,  Central Tehran Branch, Islamic Azad University, Tehran, Iran}
\email{a.shirinkalam@iau.ac.ir, a.shirinkalam-science@iaustb.ac.ir}

\begin{abstract}
For the $ C^*$-algebras  $ \mathfrak{A}, A$ and $ B $ where $ A $ and $ B $ are $  \mathfrak{A} $-bimodules with  compatible actions, we consider  amalgamated	$  \mathfrak{A} $-module tensor product of $ A $ and $ B $ and  study its relation  with the  C*-tensor product of $A$ and $B$ for the min and max norms.	We introduce and study the notions of module tensorizing maps, module exactness, and module nuclear pairs of $ C^*$-algebras in this setting. We give concrete examples of $C^*$-algebras on inverse semigroups. 
\end{abstract}

\maketitle

\section{Introduction and Preliminaries}
Finding  suitable $ C^*$-norms to complete the algebraic tensor product of  $ C^*$-algebras goes back to the works of  T. Turumaru \cite{Tu} in 1952  and Alain Guichardet \cite{G} in 1969. Since then a lot of work has been done on the subject, including the Takesaki result confirming  that the spatial tensor norm is the minimal $C^*$-norm on the  algebraic tensor product \cite{Ta}. 

There are some important notions related to the tensor product of  $ C^*$-algebras like nuclearity and exactness,   studied  among others  by Alain Connes (1978), Uffe Haagerup (1983), Simon Wassermann (1994) and Eberhard Kirchberg. The equivalence between tensor product conditions and approximation properties is now known due to fundamental results of Choi and Effros (1978), Kirchberg (1977) and Pisier (1995). 

The nuclearity (equality of min and max $C^*$-norms on algebraic tensor product with any other $C^*$-algebra) is known to be equivalent to amenability for $C^*$-algebras. B. E. Johnson in   \cite{J} showed that a locally compact group $ G $ is amenable  if and only if the group algebra $ L^1(G) $ is amenable as a Banach algebra.
A version of Johnson's result for (inverse) semigroups is proved by Massoud Amini, using  the notion of module amenability introduced in \cite{A} for Banach algebras (see also \cite{SPA}). The relation between  module nuclearity and module injectivity for $ C^*$-algebras were settled in \cite{AR}.

In this paper we further explore the notion of module tensor products of $ C^*$-algebras (specially in the minimal and  maximal tensor norms) and following Pisier \cite{P}, study tensorizing maps in the module context. This has applications in the module version of the notion of exactness, studied in the classical case by Kirchberg and Wassermann for group $C^*$-algebras \cite{kw} and for general $C^*$-algebras by Kirchberg \cite{k}.

Throughout the paper $ \mathfrak{A}, A$ and $ B$ denote  $ C^*$-algebras where $ A $ and $ B $ are $  \mathfrak{A} $-bimodules with  compatible actions. Also $ \odot $ denotes the algebraic tensor product of the algebras $ A $ and $ B $. The completion of $ A\odot B $ with respect to the minimal (maximal) tensor norm is denoted by  $ A\otimes_{\rm min} B $ and  $ A\otimes_{\rm max} B $, respectively.

    We say that $  \mathfrak{A}$ acts on  $ A $ (with a compatible action) if there are two maps $\mathfrak{A}\times A\rightarrow A;(\alpha , a)\mapsto\alpha\cdot a  $ and $ A\times \mathfrak{A}\rightarrow A;(a,\alpha)\mapsto a\cdot \alpha $ satisfying the following conditions:

\vspace{.2cm}
(i) both of the maps are linear on each variable,

(ii)  there is $ k>0 $ such that 
$ \| a\cdot\alpha \|, \| \alpha \cdot a \| \leq k \| a \|_{A} \| \alpha \|_{\mathfrak{A}} $,

(iii) $ \alpha \cdot (ab)=(\alpha \cdot a)b $ and $(ab)\cdot\alpha =a(b\cdot\alpha) $,

(iv) $ (\alpha \cdot a)^{*}=a^*\cdot \alpha^* $ and $ (a \cdot \alpha)^*=\alpha^* \cdot a^*, $

\noindent for each $ \alpha \in \mathfrak{A} $ and $ a, b \in  A$.
 \vspace{.2cm}
  
  The natural example of such action is the case where $ A $ acts on itself by the algebra multiplication.
 If $ A$  is an $  \mathfrak{A}$-bimodule with  the compatible action, then so is the dual space $  A^* $.

  Let $ X $ be a Banach $ A$-bimodule and a Banach $  \mathfrak{A}$-bimodule such that for every $\alpha \in   \mathfrak{A}, a \in A  $ and $ x \in X, $
 \begin{eqnarray*}\label{compatible} 
 	\alpha \cdot (a\cdot x)=(\alpha\cdot a)\cdot x, \quad a\cdot (\alpha \cdot x)=(a\cdot \alpha)\cdot x, \quad (\alpha \cdot x)\cdot a = \alpha\cdot (x\cdot a), 
 \end{eqnarray*}
 and the same for the right action. Then we say that $ X $ is a Banach
 $ A $-$  \mathfrak{A}$-bimodule. If moreover, for each $ \alpha\in   \mathfrak{A}$ and $ x \in X $, $ \alpha\cdot x =x\cdot \alpha $, then $ X $ is called a commutative Banach
 $ A $-$  \mathfrak{A}$-bimodule. In this case $ X^* $, the dual of $ X $ is also a commutative Banach
 $ A $-$  \mathfrak{A}$-bimodule with the canonical action. Let $ Y  $ be another Banach $ A $-$  \mathfrak{A}$-bimodule, then an $ A $-$  \mathfrak{A}$-bimodule morphism from $ X $ to $ Y $ is a bounded linear map $ \varphi :X \rightarrow Y $ which is module morphism with respect to both actions, that is,
 $$\varphi (a \cdot x)=a\cdot \varphi (x), \quad \varphi (x\cdot a)=\varphi(x)\cdot a, \quad  \varphi (\alpha \cdot x)=\alpha\cdot \varphi (x), \quad \varphi (x\cdot \alpha)=\varphi(x)\cdot \alpha,  $$ for each $\alpha \in   \mathfrak{A}, a \in A$ and $ x \in X.  $

	When $ A $ acts on itself by the algebra multiplication, it is not in general a Banach $A $-$  \mathfrak{A}$-bimodule,
	since we have not assumed the condition
	\begin{eqnarray}\label{com} 
		a  (\alpha \cdot a^\prime)= (a\cdot \alpha)a^\prime \quad(\alpha \in \mathfrak{A}, a,a^\prime \in A).
	\end{eqnarray}
	
	However, if  $A $ is a commutative  $  \mathfrak{A}$-bimodule, then the equation (\ref{com}) holds, so  $ A $ is a Banach  $A$-$  \mathfrak{A}$-bimodule.

Let $ A,B $ be $  \mathfrak{A}$-bimodules.
Let $ I_{A,B} $ be the  ideal of
$ A\odot B $ generated by the elements of the form
$ a\cdot\alpha \otimes b - a\otimes \alpha\cdot b, $ for  $ \alpha \in \mathfrak{A}, a \in A $ and $ b\in B $. We use $ I $ instead of $  I_{A,B} $ when there is no ambiguity.
The quotient space $ \dfrac{ A\odot B}{I} $ is denoted by $ A \odot_{\mathfrak{A}}B $. It is desirable to relate the $C^*$-completions of $ A \odot_{\mathfrak{A}}B $ with quotients of $C^*$-completions of $ A \odot B. $ This is done in the next section.

Let $ \pi:A \odot A \rightarrow A;\sum a_i\otimes a^\prime_i\mapsto \sum a_ia^\prime_i $ be the multiplication map  and let $ J $  be the closed ideal of $ A $ generated by $ \pi (I_{A,A})$, that is, the closed linear span of  the set of elements of the form $ (a\cdot\alpha)  a^\prime - a( \alpha\cdot a^\prime ) \:(\alpha \in \mathfrak{A}, a,a^\prime \in A) $.

\section{Module tensor products and module tensorizing maps}

In this section we explore module versions of tensorizing maps. The following definition is due to G. Pisier \cite[Definition 7.1]{P}.
\begin{den}\label{tens}
	Let $ A $ and $ B $ be  $ C^*$-algebras and let $ \gamma $  and $ \eta $ be one of the symbols min  or max.
	A linear map $ u:A\rightarrow B  $ is called $(\gamma \rightarrow \eta)$-tensorizing if for every $ C^* $-algebra $ C $,  the map $  u\odot{\rm id}_C:A\odot C\rightarrow B\odot C $ is $ \gamma$-$\eta$-continuous, equivalently for every $ x\in A\odot C  $ we have 
	$$\| u\odot {\rm id}_C(x) \|_{\eta}\leq \| x \|_{\gamma}.  $$
\end{den}

In the next definition we extend the notion of  tensor product of $ C^*$-algebras to  module tensor product.
\begin{den}\label{main}
Let $ \gamma $ be a $ C^* $-norm on $ A\odot B $. If the quotient norm on  $ A \odot_{\mathfrak{A}}B $ coincides with the norm $ \gamma $, then we define the $ \mathfrak{A} $-module $ \gamma $-tensor product of $ A $ and $ B $ as the completion of $ A \odot_{\mathfrak{A}}B $ with respect to this norm and  denote it  by $ A\otimes_{\mathfrak{A},\gamma}B $. In this case for $ x\in A\odot B $ we have $ \|x+I\|_\gamma:=\inf_{i\in I}\| x+i\|_\gamma $.
\end{den}	

The next result now follows immediately.

\begin{pro}	\label{ideal}
Let $ \gamma $ be a $ C^* $-norm on $ A\odot B $ and 	
let $ I_\gamma $ be the closure of $ I $ in this norm. Then we have an isometric isomorphism 	
	$$ (A\otimes_{\gamma}B)/ I_\gamma \simeq A\otimes_{\mathfrak{A},\gamma}B, $$
where 	$  A\otimes_{\gamma}B $ is the completion of $ A\odot B $ with respect to $ \gamma $.
\end{pro}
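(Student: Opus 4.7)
The plan is to exhibit $(A\otimes_{\gamma}B)/I_\gamma$ as a completion of $A\odot_{\mathfrak{A}}B$ equipped with its quotient $\gamma$-norm, and then invoke the uniqueness of completions together with the definition of $A\otimes_{\mathfrak{A},\gamma}B$ given in Definition \ref{main}.

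First I would check that $I_\gamma$ is a closed two-sided $*$-ideal in $A\otimes_{\gamma}B$: the defining generators $a\cdot\alpha\otimes b - a\otimes\alpha\cdot b$ form a self-adjoint set (using axiom (iv) of the compatible action together with $(\alpha\cdot a)^*=a^*\cdot\alpha^*$), and $I$ is already a two-sided ideal of $A\odot B$ by construction, so taking the $\gamma$-closure preserves these properties. Consequently $(A\otimes_{\gamma}B)/I_\gamma$ is a $C^*$-algebra with its quotient norm. Next, consider the canonical linear map
\[
\iota:A\odot_{\mathfrak{A}}B \longrightarrow (A\otimes_{\gamma}B)/I_\gamma,\qquad x+I\longmapsto x+I_\gamma,
\]
which is well-defined because $I\subset I_\gamma$.

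The core step is to show $\iota$ is isometric when $A\odot_{\mathfrak{A}}B$ carries the quotient $\gamma$-norm. For $x\in A\odot B$ one inequality is trivial: since $I\subset I_\gamma$,
\[
\|x+I_\gamma\|=\inf_{y\in I_\gamma}\|x-y\|_\gamma \;\leq\; \inf_{i\in I}\|x-i\|_\gamma=\|x+I\|_\gamma.
\]
For the reverse, given $y\in I_\gamma$ and $\varepsilon>0$ choose $i\in I$ with $\|y-i\|_\gamma<\varepsilon$ (possible by density of $I$ in $I_\gamma$); then $\|x+I\|_\gamma\leq\|x-i\|_\gamma\leq\|x-y\|_\gamma+\varepsilon$, and taking the infimum over $y$ and letting $\varepsilon\to 0$ yields $\|x+I\|_\gamma\leq\|x+I_\gamma\|$. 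Hence $\iota$ is isometric.

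Finally, $\iota$ has dense image because $A\odot B$ is $\gamma$-dense in $A\otimes_{\gamma}B$ and the quotient map is norm-decreasing, while $(A\otimes_{\gamma}B)/I_\gamma$ is complete as a quotient of a Banach space by a closed subspace. Therefore $(A\otimes_{\gamma}B)/I_\gamma$ is an isometric completion of $(A\odot_{\mathfrak{A}}B,\|\cdot\|_\gamma)$, and by Definition \ref{main} this completion is precisely $A\otimes_{\mathfrak{A},\gamma}B$; uniqueness of the completion gives the asserted isometric isomorphism. The only point requiring any care is the isometry in the middle step, and even there the argument reduces to the standard density trick, which is presumably why the authors call the result immediate.
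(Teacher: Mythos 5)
Your argument is correct and is exactly the quotient/completion argument the paper has in mind when it declares the result immediate: since $I$ is dense in $I_\gamma$, the quotient norm of $(A\otimes_{\gamma}B)/I_\gamma$ restricts on $A\odot B$ to $\inf_{i\in I}\|x+i\|_\gamma$, so the quotient is a completion of $\bigl(A\odot_{\mathfrak{A}}B,\|\cdot\|_\gamma\bigr)$ and uniqueness of completions yields the isometric isomorphism. The only imprecise point is the parenthetical claim that the generators of $I$ form a self-adjoint set --- the adjoint of $a\cdot\alpha\otimes b-a\otimes\alpha\cdot b$ is $\alpha^*\cdot a^*\otimes b^*-a^*\otimes b^*\cdot\alpha^*$, which involves the opposite one-sided actions and so need not lie in $I$ in general --- but this issue is already implicit in the paper's Definition \ref{main} (which presupposes the quotient is a pre-$C^*$-algebra) and does not affect your isometry argument.
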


In particular, it follows that the module max-norm is well behaved with respect to inclusion of ideals.
	
\begin{cor}	\label{2.3}
Let $ N $ be a closed two-sided	ideal of a $ C^* $-algebra $ A $. Then for any 
$ C^* $-algebra $ B $ we have an isometric embedding $N\otimes_{\mathfrak{A},\rm max}B\subseteq A\otimes_{\mathfrak{A},\rm max}B  $.
\end{cor}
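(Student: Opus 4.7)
The plan is to combine Proposition \ref{ideal} with the classical exactness of the maximal tensor product, then handle the residual obstruction with an approximate unit argument.

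First, I would apply Proposition \ref{ideal} twice to rewrite the two sides as
\[
N\otimes_{\mathfrak{A},\mathrm{max}}B \simeq (N\otimes_{\mathrm{max}}B)/\overline{I_{N,B}},\qquad A\otimes_{\mathfrak{A},\mathrm{max}}B \simeq (A\otimes_{\mathrm{max}}B)/\overline{I_{A,B}},
\]
with closures taken in the ambient maximal tensor products. Since $N$ is a closed two-sided ideal of $A$, it is classical that $N\otimes_{\mathrm{max}}B$ sits inside $A\otimes_{\mathrm{max}}B$ as a closed two-sided ideal via an isometric inclusion. Combined with the obvious containment $I_{N,B}\subseteq I_{A,B}$, this induces a natural $*$-homomorphism $\phi:N\otimes_{\mathfrak{A},\mathrm{max}}B\to A\otimes_{\mathfrak{A},\mathrm{max}}B$. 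Since a $*$-homomorphism between $C^*$-algebras is isometric if and only if it is injective, the whole problem reduces to showing
\[
\overline{I_{N,B}} = (N\otimes_{\mathrm{max}}B)\cap\overline{I_{A,B}}.
\]

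The inclusion $\subseteq$ is immediate; for the reverse I would pick an approximate unit $(e_\lambda)\subset N$ and consider the contractions $L_\lambda := e_\lambda\otimes\mathrm{id}_B$ acting on $A\otimes_{\mathrm{max}}B$ by left multiplication on the first tensorand. For a generator $w = (a\cdot\alpha)\otimes b - a\otimes(\alpha\cdot b)$ of $I_{A,B}$, the compatibility $e_\lambda(a\cdot\alpha) = (e_\lambda a)\cdot\alpha$ from condition (iii) gives
\[
L_\lambda w = \bigl((e_\lambda a)\cdot\alpha\bigr)\otimes b - (e_\lambda a)\otimes(\alpha\cdot b),
\]
which is itself a generator of $I_{N,B}$, since $e_\lambda a\in N$ by the ideal property. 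Given $x\in(N\otimes_{\mathrm{max}}B)\cap\overline{I_{A,B}}$ and $y_k\in I_{A,B}$ with $y_k\to x$ in the max norm, linearity and continuity of $L_\lambda$ yield $L_\lambda x = \lim_k L_\lambda y_k \in \overline{I_{N,B}}$. Finally, because $(e_\lambda)$ acts as an approximate unit on the closed ideal $N\otimes_{\mathrm{max}}B$ of $A\otimes_{\mathrm{max}}B$, we have $L_\lambda x\to x$, so $x\in\overline{I_{N,B}}$, as required.

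The chief technical obstacle will be extending the generator-level calculation to an arbitrary element of the \emph{two-sided ideal} $I_{A,B}$: a generic member takes the form $\sum_i u_iw_iv_i$ with $w_i$ a generator and $u_i,v_i\in A\odot B$, and rewriting $L_\lambda(u_iw_iv_i) = (L_\lambda u_i)w_iv_i$ as a linear combination of $I_{N,B}$-generators requires repeated use of condition (iii) together with the ideal property of $N$. One should also tacitly assume that $N$ is $\mathfrak{A}$-invariant---the natural module-theoretic hypothesis---so that the symbols $I_{N,B}\subseteq N\odot B$ are literally meaningful.
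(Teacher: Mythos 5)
Your proposal is correct, and it is in fact more complete than the paper's own argument. The paper's proof is three lines long: it quotes Pisier's Lemma 7.14 for the classical inclusion $N\otimes_{\rm max}B\subseteq A\otimes_{\rm max}B$, observes $I_{N,B}\subseteq I_{A,B}$, and writes ``hence'' the module inclusion holds. As you correctly point out, the containment of ideals only yields a well-defined $*$-homomorphism $(N\otimes_{\rm max}B)/\overline{I_{N,B}}\to (A\otimes_{\rm max}B)/\overline{I_{A,B}}$; injectivity (equivalently, isometry) requires the further identity $\overline{I_{N,B}}=(N\otimes_{\rm max}B)\cap\overline{I_{A,B}}$, which the paper never addresses. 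Your approximate-unit argument supplies exactly this missing step, and it does go through. The ``chief technical obstacle'' you flag closes without much pain: $\overline{I_{N,B}}$ is a closed two-sided ideal of $N\otimes_{\rm max}B$, which is itself a closed two-sided ideal of $A\otimes_{\rm max}B$, so $\overline{I_{N,B}}$ is automatically a closed two-sided ideal of the larger algebra; hence right factors $v\in A\odot B$ are absorbed for free, and for a word $ugv$ with $g$ a generator one only needs $(e_\lambda\otimes 1)\,u\,g\in\overline{I_{N,B}}$, which follows from your generator computation together with the identity $(n\otimes b')g=(1\otimes b')g_{na}$, where $g_{na}=\bigl((na)\cdot\alpha\bigr)\otimes b-(na)\otimes(\alpha\cdot b)$ is a generator of $I_{N,B}$ and $1\otimes b'$, being a multiplier of $N\otimes_{\rm max}B$, maps $\overline{I_{N,B}}$ into itself. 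Two minor remarks: the $\mathfrak{A}$-invariance of $N$ need not be assumed, since $n\cdot\alpha=\lim_\mu (ne_\mu)\cdot\alpha=\lim_\mu n(e_\mu\cdot\alpha)\in N$ by compatibility (iii) and the ideal property; and $L_\lambda$ should be read as left multiplication by $e_\lambda\otimes 1$ in the multiplier algebra (or unitization) of $A\otimes_{\rm max}B$, which is indeed contractive. In short: you start where the paper starts, but then carry out the injectivity verification that the paper omits.
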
	
\begin{proof}
	By 	\cite[Lemma 7.14]{P}, we have $ N\otimes_{\rm max}B\subseteq  A\otimes_{\rm max}B  $.
Let $ \bar I_{N,B} $ be the closed ideal in $ N\otimes_{\rm max}B $ generated by the elements of the form 
$ n\cdot \alpha\otimes b-n\otimes \alpha \cdot b $. Then $ I_N\subseteq I $. Hence 
$ N\otimes_{\mathfrak{A},\rm max}B\subseteq   A\otimes_{\mathfrak{A},\rm max}B $.
\end{proof}

More generally, let $ D $ be a $ C^* $-subalgebra of a 
$ C^* $-algebra $ A $. The inclusion $ D\subseteq A $ is called  max-injective \cite[7.2]{P}
whenever for every $ C^* $-algebra $ C $ the inclusion $ D\otimes_{\rm max} C\subseteq A\otimes_{\rm max} C $ holds. We extend this definition to the case of module tensor product of $ C^* $-algebras.

\begin{den}
	The inclusion $ D\subseteq A $ is called  $ \mathfrak{A} $-max-injective if for every $ \mathfrak{A} $-bimodule $ C $ with compatible actions, the maximal norm on $ D\odot_\mathfrak{A} C $ coincides with the norm inherited from $ A\otimes_{\mathfrak{A}, \rm max} C $. Equivalently, the map $  D\otimes_{\mathfrak{A},\rm max} C\rightarrow  A\otimes_{\mathfrak{A},\rm max} C $ is injective (and so isometric).
\end{den}
As an example, if $ D $ be a closed two-sided ideal of $ A $, then by the above corollary, $ D\subseteq A $ is $ \mathfrak{A} $-max-injective.

The case of min-norm is less problematic. If  $ D\subseteq A $ the restriction of the min-norm of $ A\otimes_{\mathfrak{A},\rm  min} B $  to $ D\odot_\mathfrak{A}B $  always coincides  with the norm of $ D\otimes_{\mathfrak{A},\rm  min} B $. This means that if  $ D\subseteq A $, then for every $ B $, we have $ D\otimes_{\mathfrak{A},\rm min}B\subseteq A\otimes_{\mathfrak{A},\rm min}B. $

Next let us treat the quotient norms. Let $ u:A\rightarrow B  $ be a  completely bounded (completely positive) $\mathfrak{A}  $-linear map. The following facts are well-known:

\vspace{.2cm}
(i) The map $ u\odot{\rm id}_C:A\odot C\rightarrow B\odot C $ induces a  map $ u\odot_{\mathfrak{A}} {\rm id}_C:A\odot_{\mathfrak{A}} C\rightarrow B\odot_{\mathfrak{A}} C.  $ 

(ii) The  map $ \tilde{u}:A/J_A \rightarrow B/J_B; a+J_A\mapsto u(a)+J_B $ is  completely bounded (completely positive)  with $ \| \tilde{u} \|_{c.b.}\leq \| u\|_{c.b.} $.
\vspace{.2cm}

\begin{den}\label{a-max}
	For every $ x\in A\odot B $ we define 
	$$ \|x+I \|_{\mathfrak{A}{\rm- max}}=\sup\| \pi(x+I) \|$$
	where the supremum runs over all the *-homomorphisms of 
	$A\odot_{\mathfrak{A}} B  $. Similarly, for each $ x=\sum_{i} a_i\otimes b_i\in A\odot B $ we define 
	$$ \|x+I \|_{\mathfrak{A}{\rm-min}}= \|\textstyle\sum_i \pi(a_i)\otimes \sigma (b_i) \|_{B(\mathcal{H}\bar{\otimes}\mathcal{K})}.  $$
\end{den}

The next result guarantees that the definitions \ref{main} and \ref{a-max} are compatible for the module min and max norms.

\begin{thm}\label{main1}
	For $\gamma=\rm min$ or $\rm max $, and each $ x\in A\odot B $ we have
	\begin{equation*}
		\|x+I \|_{\mathfrak{A}-{\gamma}}= \|x+I \|_{\gamma}.
	\end{equation*}
\end{thm}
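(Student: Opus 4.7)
My plan is to treat the two cases separately, in each case reducing the assertion to Proposition~\ref{ideal} together with standard facts about C*-quotients and their representations.

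For $\gamma=\max$, Proposition~\ref{ideal} gives the isometric identification $(A\otimes_{\max}B)/I_{\max}\simeq A\otimes_{\mathfrak{A},\max}B$. Since the quotient of a C*-algebra by a closed two-sided ideal is again a C*-algebra, its norm coincides with the universal C*-norm obtained as $\sup_\rho\|\rho(\,\cdot\,)\|$ over all $*$-representations $\rho$ of the quotient. Such $\rho$ are in bijective correspondence with $*$-representations of $A\odot B$ that annihilate $I$, which is exactly the class of $*$-representations of $A\odot_{\mathfrak{A}}B$ appearing in Definition~\ref{a-max}. Hence $\|x+I\|_{\max}=\|x+I\|_{\mathfrak{A}-\max}$, finishing this half.

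For $\gamma=\min$, I would fix the faithful representations $\pi\colon A\to B(\mathcal{H})$ and $\sigma\colon B\to B(\mathcal{K})$ implicit in Definition~\ref{a-max}, chosen so that $\pi\otimes\sigma$ vanishes on $I$, i.e.\ $\pi(a\cdot\alpha)\otimes\sigma(b)=\pi(a)\otimes\sigma(\alpha\cdot b)$ for all admissible $\alpha,a,b$. By the spatial definition of the minimal norm, $\pi\otimes\sigma$ extends to an isometry $A\otimes_{\min}B\hookrightarrow B(\mathcal{H}\bar\otimes\mathcal{K})$. Since it kills $I$, it descends to a $*$-homomorphism from $(A\otimes_{\min}B)/I_{\min}\simeq A\otimes_{\mathfrak{A},\min}B$ into $B(\mathcal{H}\bar\otimes\mathcal{K})$, immediately giving $\|x+I\|_{\mathfrak{A}-\min}\leq\|x+I\|_{\min}$. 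To obtain equality I would verify that this induced map is injective on the quotient, which, as a $*$-homomorphism of C*-algebras, forces it to be isometric.

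The main obstacle is the injectivity step in the min-case: the minimal norm does not in general respect quotients, so some care is needed to guarantee $\ker(\pi\otimes\sigma)\cap(A\odot B)=I$ (rather than something strictly larger). I expect this to be achievable by choosing $\pi$ and $\sigma$ to be faithful representations adapted to the module structure of $\mathfrak{A}$ (for instance, factoring through $A/J_A$ and $B/J_B$, as hinted at by the preparatory remarks (i)--(ii) preceding Definition~\ref{a-max}), so that exactly the module relations $a\cdot\alpha\otimes b\equiv a\otimes\alpha\cdot b$ and no further identifications are forced; this yields the isometric embedding and hence the desired equality.
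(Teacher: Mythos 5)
Your treatment of the max case is correct and is essentially the paper's argument: both identify $*$-representations of $(A\otimes_{\rm max}B)/I_{\rm max}$ with $*$-representations of $A\odot B$ that annihilate $I$ (equivalently, with $*$-homomorphisms of $A\odot_{\mathfrak{A}}B$, or with commuting pairs $(\pi,\sigma)$ killing $I$), and then compare the two suprema. Nothing to change there.

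The min case, however, contains a genuine contradiction. You ask for $\pi$ and $\sigma$ faithful \emph{and} for $\pi\odot\sigma$ to vanish on $I$. These two requirements are incompatible unless $I=0$: for faithful $\pi,\sigma$ the map $\pi\odot\sigma$ is injective on $A\odot B$ (this is \cite[Proposition 3.1.12]{BO}, the very fact the paper invokes) and extends to an isometry on $A\otimes_{\rm min}B$, so it cannot annihilate the nonzero ideal $I_{\rm min}$; in particular the relation $\pi(a\cdot\alpha)\otimes\sigma(b)=\pi(a)\otimes\sigma(\alpha\cdot b)$ you impose would pull back to $a\cdot\alpha\otimes b=a\otimes\alpha\cdot b$ in $A\odot B$, which is false in general. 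Your proposed repair --- letting $\pi,\sigma$ factor through $A/J_A$ and $B/J_B$ --- destroys faithfulness on $A$ and $B$, so $\pi\odot\sigma$ is no longer a min-isometry and the identification of $\|\pi\odot\sigma(x)\|$ with $\inf_{i\in I}\|x+i\|_{\rm min}$ collapses; moreover $(A\otimes_{\rm min}B)/I_{\rm min}$ is not in general $(A/J_A)\otimes_{\rm min}(B/J_B)$ (the paper only obtains identifications of that flavour, and only in one factor, under the trivial one-sided action hypotheses of Theorem~\ref{triv}). The paper's actual route avoids this: it keeps $\pi,\sigma$ faithful, transports the module actions to $\pi(A)$ and $\sigma(B)$ so that $\pi\odot\sigma$ carries $I$ onto the corresponding ideal $I'$ of $\pi(A)\odot\sigma(B)$, checks that the induced map $\pi\odot_{\mathfrak{A}}\sigma$ between the \emph{quotients} is injective, and only then composes with a faithful representation of the concrete quotient; the equality of quotient norms then follows because a min-isometry matching $I$ with $I'$ induces an isometry of the quotient seminorms. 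You need this ``quotient of an isometric image'' step in place of ``isometry that kills $I$''.
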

\begin{proof}
	First let us treat the case of $\gamma$=max: 
	Let $ \rho:\mathfrak{A}\rightarrow B(\mathcal{H}) $ be a *-homomorphism. Using this, one could define a left and a right  action of $ \mathfrak{A} $ on $ B(\mathcal{H}) $ via $ \alpha \cdot T:=\rho(\alpha)T $ and $ T\cdot \alpha:=T\rho(\alpha), $ for $ \alpha \in \mathfrak{A} $ and $ T\in B(\mathcal{H}) $. This makes $ B(\mathcal{H}) $  an $ \mathfrak{A} $-bimodule with compatible actions.
	Let $ \pi:A\rightarrow B(\mathcal{H}) $ and $ \sigma:B\rightarrow B(\mathcal{H}) $ be $ \mathfrak{A} $-bimodule *-representations with commuting ranges. Then there is a unique $ \mathfrak{A} $-bimodule *-homomorphism $ \pi\times \sigma:A\odot B\rightarrow  B(\mathcal{H});a\otimes b\mapsto \pi(a)\sigma(b)$. Let $ I:=I_{A,B} $. If $\pi\times \sigma\arrowvert_{I}=0  $, then we obtain  a unique  *-representation $ \pi\times_{\mathfrak{A}} \sigma:A\odot_{\mathfrak{A}} B\rightarrow  B(\mathcal{H})$. 
	
	Conversely, for every  *-homomorphism $ \varphi:A\odot_{\mathfrak{A}} B\rightarrow  B(\mathcal{H}) $, the *-homomorphism $ \psi:A\odot B\rightarrow  B(\mathcal{H}) $ defined by $ \psi(a\otimes b)=\varphi(a\otimes b+I) $  vanishes on $ I $. Extending $\psi$ to the algebraic tensor product of unitizations of $A$ and $B$ if needed, we may assume that $A$ and $B$ are unital. Putting $ \pi(a):=\psi(a\otimes 1_B) $ and $ \sigma(b):=\psi(1_A\otimes b), $ we obtain $ \mathfrak{A} $-bimodule *-representations
	$ \pi:A\rightarrow B(\mathcal{H}) $ and $ \sigma:B\rightarrow B(\mathcal{H}) $ with commuting ranges. In other words, there is a one-one correspondence between the pairs $ (\pi,\sigma) $ of *-representations on $ A $ and $ B $ with commuting ranges with $\pi\times \sigma\arrowvert_{I}=0  $ and the *-homomorphisms of 
	$A\odot_{\mathfrak{A}} B  $.
	
	By the above definition, the norm $ \mathfrak{A}$-$\rm max $ has the universal property. Indeed, if $ \varphi:A\odot_{\mathfrak{A}} B\rightarrow  B(\mathcal{H}) $ is a *-homomorphism, then there is a unique *-homomorphism $ \tilde{\varphi}:A\otimes_{\mathfrak{A},{\rm max}} B\rightarrow  B(\mathcal{H}) $
	that extends $ \varphi $. In particular, for every $ \mathfrak{A} $-bimodule *-representations $ \pi:A\rightarrow B(\mathcal{H}) $ and $ \sigma:B\rightarrow B(\mathcal{H}) $ with commuting ranges and $\pi\times \sigma\arrowvert_{I}=0  $, there is a unique *-representations $ \pi\times_{\mathfrak{A}}\sigma:A\otimes_{\mathfrak{A},{\rm max}} B\rightarrow B(\mathcal{H})$.
	In particular, for every $ x\in A\odot B$, 
	
	\begin{equation*}
		\|x+I \|_{\mathfrak{A}{\rm -max}}=\inf_{i\in I} \|x+i \|_{\rm max}. 
	\end{equation*} 
	
	Next we treat the case of $\gamma$=min: 
	Let $ \pi:A\rightarrow  B(\mathcal{H})$ and $ \sigma:B\rightarrow  B(\mathcal{K}) $ be injective *-representations. Then $$ \pi\odot\sigma:A\odot B\rightarrow  B(\mathcal{H})\odot  B(\mathcal{K})\subseteq  B(\mathcal{H}\bar{\otimes}\mathcal{K});\ \textstyle\sum_i a_i\otimes b_i\mapsto\textstyle\sum_i \pi(a_i)\otimes \sigma (b_i)$$ is  an injective *-homomorphism \cite[Proposition 3.1.12]{BO}. Again by passing to minimal unitizations we may assume that $A$ and $B$ are unital. 
	Let $I=I_{A,B}$. Then $I$ is the closure of the span $I_0$ of elements of the form $a\cdot\alpha\otimes b-a\otimes\alpha\cdot b$. Define $\mathfrak{A}$-actions on $B(\mathcal{H})$ and $B(\mathcal{K})$ as follows:
	$$x\cdot\alpha:=x\pi(1_A\cdot \alpha), \ \ \alpha\cdot y:=\sigma(\alpha\cdot 1_B)y,$$
	for $\alpha\in \mathfrak{A}, x\in B(\mathcal{H})$ and $y\in B(\mathcal{K})$. Let $I^{'}
	$ be the corresponding ideal of $\pi(A)\odot \sigma(B)$. Then clearly $(\pi\odot\sigma)(I)\subseteq I^{'} $, thus there is a *-homomorphism 
	$$ \pi\odot_{\mathfrak{A}}\sigma:A\odot_{\mathfrak{A}} B\rightarrow  \pi(A)\odot_{\mathfrak{A}} \sigma(B);\ \textstyle\sum_i(a_i\otimes b_i+I)\mapsto\textstyle\sum_i(\pi(a_i)\otimes \sigma (b_i))+I^{'},$$
	which is also a module map. Let us observe that $ \pi\odot_{\mathfrak{A}}\sigma$ is injective: if $\pi\odot_{\mathfrak{A}}\sigma(x+I)=0$, for some $x\in A\odot B$, then $(\pi\odot\sigma)(x)\in I^{'}$ and so it is the norm limit of a sequence of elements of the form $(\pi\odot\sigma)(y_i)$, with $y_i\in I_0$. Since $\pi\odot\sigma$ is injective, it follows that the sequence $(y_i)$ is Cauchy and so convergent to some $y\in I$. Thus we have $(\pi\odot\sigma)(x)=(\pi\odot\sigma)(y)$, and again by  injectivity of $\pi\odot\sigma$, $x=y\in I$, that is $x+I=0$.
	Now since $\pi(A)\odot_{\mathfrak{A}} \sigma(B)$ is a pre-C*-algebra, it has a faithful representation $\rho: \pi(A)\odot_{\mathfrak{A}} \sigma(B)\to B(\mathcal L)$ in a Hilbert space $\mathcal L$, and 
	$$\rho\circ (\pi\odot_{\mathfrak{A}}\sigma):A\odot_{\mathfrak{A}} B\to  B(\mathcal L)$$
	is an injective *-homomorphism.

	Note that by a similar argument to that of \cite[Proposition 3.3.11]{BO}, the  $ \mathfrak{A}$-$\rm min $ norm is independent of the choice of faithful representations $ \pi $ and $ \sigma $.

	Now  let $ \pi:A\rightarrow  B(\mathcal{H})$ and $ \sigma:B\rightarrow  B(\mathcal{K}) $ be injective *-representations. Then
	\begin{equation}\label{2.7}
		 \|x+I \|_{\mathfrak{A}{\rm-min}}= \|\rho\circ (\pi\odot_{\mathfrak{A}}\sigma)(x+I)\|=\inf_{i^{'}\in I^{'}}\| \pi\odot \sigma(x)+i^{'}\|=\inf_{i\in I}\| x+i\|=\|x+I \|_{\rm min},
	\end{equation}
	  
	for each  $ x\in A\odot B. $
\end{proof}

The next definition could be compared with Definition \ref{tens}.
\begin{den}\label{key}
	Let $ \gamma $  and $ \eta $ be one of the symbols min  or  max.
	An $\mathfrak{A}  $-linear map $ u:A\rightarrow B  $ is called $\mathfrak{A}$-$(\gamma \to \eta)$-tensorizing if for every $ C^* $-algebra $C$ that is an $\mathfrak{A}$-bimodule with compatible actions, we have 
	$$\| u\odot_{\mathfrak{A}}{\rm id}_C(\bar{x}) \|_{\eta}\leq \| \bar{x} \|_{\gamma},  $$
	for every $ \bar{x}\in A\odot_{\mathfrak{A}} C,  $  that is, the map $  u\odot_{\mathfrak{A}} {\rm id}_C:A\odot_{\mathfrak{A}} C\rightarrow B\odot_{\mathfrak{A}} C $ is $ \gamma$-$\eta$-continuous. 
	
	In this case the map $  u\odot_{\mathfrak{A}}{\rm id}_C $ extends to a contraction $u\otimes_{\mathfrak{A}}{\rm id}_C:A\otimes_{\mathfrak{A},\gamma} C\rightarrow B\otimes_{\mathfrak{A},\eta} C.  $
\end{den}

\begin{lem}\label{123}
If $ u :A\rightarrow B $ is an $\mathfrak{A}  $-module map, then for each $ C^*$-algebra $ C $ and  $ x\in A\odot C, $ 
\begin{equation}
	\| u\odot_{\mathfrak{A}} {\rm id}_C(\bar{x})\|_{\rm min}= \| u\odot {\rm id}_C(x)\|_{\rm min}
\end{equation}
where $ \bar{x}\in A\odot_{\mathfrak{A} } C $ is the image of $ x $ in the canonical embedding.
\end{lem}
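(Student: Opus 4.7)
My plan is to reduce both sides to the same computation inside a concrete faithful $*$-representation and to piggyback on the $\min$-case construction already assembled in the proof of Theorem~\ref{main1}. Since $u$ is $\mathfrak{A}$-linear, a typical generator $a\cdot\alpha\otimes c - a\otimes\alpha\cdot c$ of $I_{A,C}$ is sent by $u\odot\operatorname{id}_C$ to the analogous generator $u(a)\cdot\alpha\otimes c - u(a)\otimes\alpha\cdot c$ of $I_{B,C}$. Thus $u\odot_{\mathfrak{A}}\operatorname{id}_C$ is well-defined on the quotient and $u\odot_{\mathfrak{A}}\operatorname{id}_C(\bar x) = \overline{(u\odot\operatorname{id}_C)(x)}$. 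Writing $x=\sum_i a_i\otimes c_i$ and $y:=(u\odot\operatorname{id}_C)(x)=\sum_i u(a_i)\otimes c_i\in B\odot C$, the claim becomes $\|\bar y\|_{\mathfrak{A}-\min}=\|y\|_{\min}$.

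For the computation I would fix injective $*$-representations $\pi\colon B\to B(\mathcal{H})$ and $\sigma\colon C\to B(\mathcal{K})$, after passing to minimal unitizations if needed. By Takesaki's theorem (in the form of \cite[Proposition~3.1.12]{BO}), the minimal $C^{*}$-norm on $B\odot C$ is given by $\|y\|_{\min} = \|\sum_i\pi(u(a_i))\otimes\sigma(c_i)\|_{B(\mathcal{H}\bar\otimes\mathcal{K})}$. Now endow $B(\mathcal{H})$ and $B(\mathcal{K})$ with the induced $\mathfrak{A}$-actions $\xi\cdot\alpha:=\xi\,\pi(1_B\cdot\alpha)$ and $\alpha\cdot\zeta:=\sigma(\alpha\cdot 1_C)\zeta$, exactly as in the $\min$-case proof of Theorem~\ref{main1}. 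With these actions, Definition~\ref{a-max} evaluates $\|\bar y\|_{\mathfrak{A}-\min}$ by the same expression $\|\sum_i\pi(u(a_i))\otimes\sigma(c_i)\|_{B(\mathcal{H}\bar\otimes\mathcal{K})}$, and Theorem~\ref{main1} identifies this quantity with the completion norm of $\bar y$ in $B\otimes_{\mathfrak{A},\min}C$. Comparing the two expressions yields the desired equality.

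The delicate point, which will be the main technical step, is to confirm that the formula from Definition~\ref{a-max} genuinely descends to a norm on the quotient $B\odot_{\mathfrak{A}}C$, i.e., that the induced actions on $B(\mathcal{H})$ and $B(\mathcal{K})$ cause the image of every generator of $I_{B,C}$ to vanish under the concrete representation. This compatibility is precisely what the choice of actions $\xi\cdot\alpha$ and $\alpha\cdot\zeta$ (via $1_B\cdot\alpha$ and $\alpha\cdot 1_C$) is designed to ensure, and the $\mathfrak{A}$-linearity of $u$ is what makes the passage from $x$ to $y$ compatible with this structure. Once this verification is made along the lines of the proof of Theorem~\ref{main1}, the lemma is immediate.
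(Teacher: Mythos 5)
Your argument is correct and follows essentially the same route as the paper: both reduce the computation to a fixed pair of injective representations $\pi$, $\sigma$ and invoke the identity (2.7) from the proof of Theorem \ref{main1} to identify $\|\sum_i u(a_i)\otimes c_i + I_{B,C}\|_{\mathfrak{A}\text{-}\min}$ with $\|\sum_i \pi(u(a_i))\otimes\sigma(c_i)\|_{B(\mathcal{H}\bar{\otimes}\mathcal{K})}$, which is by definition $\|u\odot\mathrm{id}_C(x)\|_{\min}$. The only difference is that you spell out the well-definedness of $u\odot_{\mathfrak{A}}\mathrm{id}_C$ on the quotient, which the paper records separately as fact (i) before Definition \ref{a-max}.
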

\begin{proof}
Suppose that $ \pi: B\rightarrow B(\mathcal{H}) $ and $ \sigma:C \rightarrow B(\mathcal{K}) $ are injective *-representations and let $ x=\sum a_i\otimes c_i $. We denote $ I_{B,C} $ by $ I$.  By  (\ref{2.7})
\begin{align*}
\|  u\odot_{\mathfrak{A}} {\rm id}_C(\bar{x})\|_{\rm min}&=\| \sum u(a_i)\otimes c_i+I\|_{\rm min}\\&=\| \sum \pi u(a_i)\otimes \sigma (c_i)\|_{B(\mathcal{H}\bar{\otimes}\mathcal{K})}\\&=\| u\odot {\rm id}_C(x)\|_{\rm min}, 
\end{align*} 
where the last equality is just the definition of the spatial tensor product.
\end{proof}
\begin{cor}
	If $ u :A\rightarrow B $ is  $\mathfrak{A}$-$(\rm min \to \rm min)$-tensorizing, then it is $(\rm min \to \rm min)$-tensorizing.
\end{cor}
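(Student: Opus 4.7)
The plan is to derive the conclusion from Lemma~\ref{123} by a short chain of (in)equalities, with no additional input needed. First, I will fix an arbitrary $C^{*}$-algebra $C$ and equip it with an $\mathfrak{A}$-bimodule structure with compatible actions in the sense of the introduction; the trivial actions $\alpha\cdot c = c\cdot\alpha = 0$ satisfy axioms (i)--(iv) vacuously, so this choice is always available for any $C$. Fix $x\in A\odot C$ with image $\bar{x}\in A\odot_{\mathfrak{A}} C$; the argument consists in chaining three facts.

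Step one is Lemma~\ref{123} applied to the $\mathfrak{A}$-linear map $u\colon A\to B$, which gives
\[
\|u\odot\mathrm{id}_{C}(x)\|_{\min}\;=\;\|u\odot_{\mathfrak{A}}\mathrm{id}_{C}(\bar{x})\|_{\min}.
\]
Step two is the hypothesis that $u$ is $\mathfrak{A}$-$(\min\to\min)$-tensorizing in the sense of Definition~\ref{key}, applied to the $\mathfrak{A}$-bimodule $C$: this yields
\[
\|u\odot_{\mathfrak{A}}\mathrm{id}_{C}(\bar{x})\|_{\min}\;\le\;\|\bar{x}\|_{\min}.
\]
Step three is Lemma~\ref{123} once more, this time for the identity map $\mathrm{id}_{A}\colon A\to A$ (which is trivially $\mathfrak{A}$-linear), yielding $\|\bar{x}\|_{\min}=\|x\|_{\min}$. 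Composing the three relations gives $\|u\odot\mathrm{id}_{C}(x)\|_{\min}\le\|x\|_{\min}$ for every $C^{*}$-algebra $C$ and every $x\in A\odot C$, which is exactly the requirement of Definition~\ref{tens} for the pair $(\min,\min)$.

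The proof is essentially bookkeeping once Lemma~\ref{123} is in hand, and I do not foresee any serious obstacle. The one subtle point is that both the lemma and the hypothesis require an $\mathfrak{A}$-bimodule structure on $C$; since the trivial action supplies one for any $C^{*}$-algebra $C$, the argument proceeds uniformly and no hypotheses on $u$ beyond $\mathfrak{A}$-linearity are used. In particular, the corollary says that for $\mathfrak{A}$-linear maps the module notion of $(\min\to\min)$-tensorizing is at least as strong as the classical one, which matches the moral content of Lemma~\ref{123}, namely that on the spatial side the passage from $A\odot C$ to $A\odot_{\mathfrak{A}}C$ preserves the minimal tensor norm.
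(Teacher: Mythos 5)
Your chain of (in)equalities is exactly the deduction the paper intends: the corollary is stated without proof immediately after Lemma~\ref{123}, and the intended argument is precisely
\[
\| u\odot {\rm id}_C(x)\|_{\rm min}=\| u\odot_{\mathfrak{A}} {\rm id}_C(\bar x)\|_{\rm min}\leq \|\bar x\|_{\rm min}\leq \|x\|_{\rm min},
\]
where the first equality is Lemma~\ref{123} and the last inequality holds for free because the module min norm is a quotient norm (so your third invocation of Lemma~\ref{123}, for ${\rm id}_A$, is not actually needed; only ``$\leq$'' is).

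The one place where you go beyond the paper is your choice of module structure on an arbitrary $C$, and that is where the argument has a genuine gap. With the zero action $\alpha\cdot c=c\cdot\alpha=0$, the generators of $I_{A,C}$ become $(a\cdot\alpha)\otimes c$; if the right action of $\mathfrak{A}$ on $A$ is nondegenerate (e.g.\ in the paper's ``natural example'' $\mathfrak{A}=A$ acting on itself, where Cohen factorization gives $A\cdot\mathfrak{A}=A$), then $I_{A,C}=A\odot C$ and $A\odot_{\mathfrak{A}}C=0$. In that case $\bar x=0$, the tensorizing hypothesis is vacuous, and the first equality of your chain would read $\|u\odot{\rm id}_C(x)\|_{\rm min}=0$ for all $x$, which is absurd for $u\neq 0$; so Lemma~\ref{123} cannot be applied (and cannot hold) for the zero action, and no information about $\|u\odot{\rm id}_C(x)\|_{\rm min}$ is transferred. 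Note that the paper's own convention for endowing an algebra with a missing action (Section~4) is the character-trivial action $\alpha\cdot c=f(\alpha)c$ for a character $f\in\mathfrak{A}^*$, not the zero action, precisely because the latter degenerates the amalgamated tensor product. You should either adopt that convention or, better, state the deduction only for those module structures on $C$ for which the identification of Theorem~\ref{main1} and Lemma~\ref{123} is in force, since the whole content of the corollary is carried by Lemma~\ref{123} and is only as strong as the class of actions for which that lemma is valid.
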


Amini and Rezavand in  \cite{AR} defined the notion of $ \mathfrak{A} $-nuclearity for a $ C^*$-algebra $ A $ in the  case where the action of $ \mathfrak{A} $ on $ A $ is left trivial. Here we extend this definition to an arbitrary action.
\begin{den}
		Let $ A $ and $ B $ be  $ C^*$-algebras with compatible actions of $ \mathfrak{A} $ on them. The pair $ (A,B) $ is called $ \mathfrak{A} $-nuclear if $  A\otimes_{\mathfrak{A},{\rm min}} B\simeq A\otimes_{\mathfrak{A}, {\rm max}} B$, or equivalently, the min and the max norm on $ A\odot_{\mathfrak{A}} B $ defined by Definition \ref{main}. coincide. A $ C^*$-algebra $ A $
		is called $ \mathfrak{A} $-nuclear if for every $ C^*$-algebra $ B $  with  a compatible   $ \mathfrak{A}$-action, the pair $ (A,B) $ is  $ \mathfrak{A} $-nuclear.
\end{den}

The  next result  immediately follows from definition.
\begin{pro}
A $ C^*$-algebra $ A $
is  $ \mathfrak{A} $-nuclear if and only if the identity map $ {\rm id}:A\rightarrow A $ is $\mathfrak{A}$-$({\rm min}\to{\rm max})$-tensorizing.
\end{pro}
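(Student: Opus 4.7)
The plan is to unwind both definitions and observe that the statement reduces almost directly to Theorem~\ref{main1} together with the standard fact that $\|\cdot\|_{\min}\leq\|\cdot\|_{\max}$ on ordinary tensor products. First I would note that for $u=\mathrm{id}_A$ the induced map $\mathrm{id}_A\odot_{\mathfrak{A}}\mathrm{id}_C\colon A\odot_{\mathfrak{A}} C\to A\odot_{\mathfrak{A}} C$ is literally the identity, by the functorial construction recorded in item~(i) preceding Definition~\ref{a-max}. Therefore the condition that $\mathrm{id}_A$ is $\mathfrak{A}$-$(\min\to\max)$-tensorizing translates, via Definition~\ref{key}, into the single inequality
\[
\|\bar x\|_{\mathfrak{A}\text{-}\max}\leq \|\bar x\|_{\mathfrak{A}\text{-}\min},
\]
for every $\mathfrak{A}$-bimodule $C^*$-algebra $C$ with compatible actions and every $\bar x\in A\odot_{\mathfrak{A}}C$.

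Next I would establish the reverse inequality $\|\cdot\|_{\mathfrak{A}\text{-}\min}\leq \|\cdot\|_{\mathfrak{A}\text{-}\max}$ on $A\odot_{\mathfrak{A}} C$ unconditionally. By Theorem~\ref{main1}, for any $x\in A\odot C$ and $\gamma\in\{\min,\max\}$ one has
\[
\|x+I\|_{\mathfrak{A}\text{-}\gamma}=\|x+I\|_{\gamma}=\inf_{i\in I}\|x+i\|_{\gamma}.
\]
Since $\|y\|_{\min}\leq\|y\|_{\max}$ for every $y\in A\odot C$, taking the infimum over $i\in I$ on both sides preserves the inequality, so $\|x+I\|_{\mathfrak{A}\text{-}\min}\leq \|x+I\|_{\mathfrak{A}\text{-}\max}$.

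Combining the two steps, the $\mathfrak{A}$-$(\min\to\max)$-tensorizing property of $\mathrm{id}_A$ is equivalent to the equality $\|\bar x\|_{\mathfrak{A}\text{-}\min}=\|\bar x\|_{\mathfrak{A}\text{-}\max}$ on $A\odot_{\mathfrak{A}}C$ for every admissible $C$, which upon completion yields the isometric isomorphism $A\otimes_{\mathfrak{A},\min}C\simeq A\otimes_{\mathfrak{A},\max}C$. This is exactly $\mathfrak{A}$-nuclearity of $A$, and the reverse implication is read off from the same chain of equivalences. The argument is entirely a matter of definition chasing; the only nontrivial input is Theorem~\ref{main1}, which is needed to transport the classical min--max ordering through the quotient by the ideal $I$, so I do not anticipate any substantive obstacle.
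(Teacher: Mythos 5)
Your proposal is correct and matches the paper's intent: the paper offers no written proof, stating only that the result ``immediately follows from definition,'' and your definition-chase (the tensorizing condition for $\mathrm{id}_A$ gives $\|\bar x\|_{\mathfrak{A}\text{-}\max}\leq\|\bar x\|_{\mathfrak{A}\text{-}\min}$, while the reverse inequality is automatic via Theorem~\ref{main1} and the classical ordering of the min and max norms) is precisely the argument being alluded to.
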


If $ D\subseteq A $ is   $ \mathfrak{A} $-max-injective and the pair $ (A,B) $ is $ \mathfrak{A} $-nuclear, then so is $ (D,B) $. This is because, by  $ \mathfrak{A} $-nuclearity of  $ (A,B) $,
the restriction of the min-norm and the max-norm on $ D\odot_{\mathfrak{A}}B $ are the same, that is,
$$ D\otimes_{\mathfrak{A},\rm max}B=\overline{ D\odot_{\mathfrak{A}}B}^{\rm max}= \overline{ D\odot_{\mathfrak{A}}B}^{\rm min}=D\otimes_{\mathfrak{A},\rm min}B\subseteq A\otimes_{\mathfrak{A},\rm min}B. $$
Note that the first equality holds as   $ D\subseteq A $ is  $ \mathfrak{A} $-max-injective.

\section{Comparing tensors for  trivial one-sided actions}

	We say that the  action of $ \mathfrak{A} $ on $ A $ is left-trivial if $ \alpha \cdot a=f(\alpha)a, $ for some character $ f\in \mathfrak{A}^* $. The right-trivial action of $ \mathfrak{A} $ on $ A $ is defined similarly.

Let $ \mathfrak{A} $ act on $ A $ left-trivially.
Let $ J_0 $ be the closed linear span of elements of the form $ a\cdot \alpha -f(\alpha)a $.

\begin{thm}\label{triv}
	Let $ \mathfrak{A}, A $ and $ B $ be unital.
	
	(i)  If the  actions of $\mathfrak{A}  $ on $ A $ and $ B $ are left-trivial with the same character $  f\in \mathfrak{A}^* $, then we have the isomorphic isomorphism 
	\begin{equation}
		A\otimes_{\mathfrak{A},{\rm max}}B\simeq \dfrac{A}{J_A}\otimes_{\rm max}B.
	\end{equation}
	If in addition  the $ C^* $-algebra $ B $ is exact, then 
	\begin{equation}
		A\otimes_{\mathfrak{A},{\rm min}}B\simeq \dfrac{A}{J_A}\otimes_{\rm min}B.
	\end{equation}

	(ii)  If the  actions of $\mathfrak{A}  $ on $ A $ and $ B $ are right-trivial with the same character $  f\in \mathfrak{A}^* $, then  we have the isomorphic isomorphism
	\begin{equation}
		A\otimes_{\mathfrak{A},\rm max}B\simeq A\otimes_{\rm max}\frac{B}{J_B}.
	\end{equation} 
	In addition if the $ C^* $-algebra $ A $ is exact, then
	\begin{equation}
		A\otimes_{\mathfrak{A},\rm min}B\simeq  A\otimes_{\rm min}\frac{B}{J_B}.
	\end{equation} 
\end{thm}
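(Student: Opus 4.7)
The strategy is to apply Proposition \ref{ideal} to reduce each module tensor product to a quotient of an ordinary tensor product, and then invoke the classical short exact sequence obtained by tensoring a $C^*$-short-exact sequence with a fixed algebra.

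For part (i), assume the left actions of $\mathfrak{A}$ on $A$ and $B$ are both $\alpha\cdot x=f(\alpha)x$. The generators of $I=I_{A,B}$ then simplify as
\[ a\cdot\alpha\otimes b - a\otimes\alpha\cdot b \;=\; \bigl(a\cdot\alpha-f(\alpha)a\bigr)\otimes b, \]
so $I$ is the algebraic ideal of $A\odot B$ generated by elements of the form $s\otimes b$ with $s$ in the algebraic span of $\{a\cdot\alpha-f(\alpha)a\}$ and $b\in B$. Using axiom (iii) and the unitality of $A$, one verifies that $J_A$ is a closed two-sided ideal of $A$; consequently (by Corollary \ref{2.3} for $\gamma=\max$, and by the standard injectivity of min under ideal inclusions for $\gamma=\min$) the closure of $I$ inside $A\otimes_\gamma B$ coincides with the closed ideal $J_A\otimes_\gamma B\subseteq A\otimes_\gamma B$.

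Having made this identification, Proposition \ref{ideal} gives
\[ A\otimes_{\mathfrak{A},\gamma}B\;\cong\;(A\otimes_\gamma B)\big/(J_A\otimes_\gamma B). \]
For $\gamma=\max$, the short exact sequence $0\to J_A\to A\to A/J_A\to 0$ remains exact after applying $-\otimes_{\max}B$, yielding
\[ (A\otimes_{\max}B)\big/(J_A\otimes_{\max}B)\;\cong\;(A/J_A)\otimes_{\max}B, \]
which is the first isomorphism in (i). For $\gamma=\min$, the analogous sequence is exact after applying $-\otimes_{\min}B$ precisely when $B$ is exact in the sense of Kirchberg--Wassermann, giving the second isomorphism under the hypothesis of exactness of $B$.

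Part (ii) is handled by the symmetric argument: under right-trivial actions with the same character $f$, the generators of $I$ become $a\otimes\bigl(\alpha\cdot b-f(\alpha)b\bigr)$, so $J_B$ plays the role formerly played by $J_A$; one identifies the closure of $I$ with $A\otimes_\gamma J_B$ and runs the same Proposition \ref{ideal}-plus-exact-sequence argument, now invoking exactness of $A$ (rather than $B$) in the $\min$ case. The principal obstacle throughout is the first step---verifying that $J_A$ (respectively $J_B$) is genuinely a closed two-sided ideal and that the closure of $I$ exhausts $J_A\otimes_\gamma B$ (respectively $A\otimes_\gamma J_B$) inside the ambient tensor product. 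Once this density/closure identification is in place, both parts of the theorem are obtained by routine application of Proposition \ref{ideal} together with standard facts about tensor products of $C^*$-algebras with ideals.
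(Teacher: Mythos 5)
Your proposal is correct and follows essentially the same route as the paper: simplify the generators of $I$ to $(a\cdot\alpha-f(\alpha)a)\otimes b$, identify the $\gamma$-closure of $I$ with $J_A\otimes_\gamma B$ (which, as in the paper, reduces to checking $J_0=J_A$ via unitality and the ideal property), invoke Proposition \ref{ideal}, and finish with the exactness of $-\otimes_{\max}B$ (respectively $-\otimes_{\min}B$ when $B$ is exact) applied to $0\to J_A\to A\to A/J_A\to 0$. The paper merely spells out the two inclusions $I_{\max}\subseteq J_A\otimes_{\max}B$ and $J_A\odot B\subseteq I_{\max}$ that you flag as the "principal obstacle," using the same generator identity $(a_1\cdot\alpha)a_2-a_1(\alpha\cdot a_2)=(a_1\cdot\alpha-f(\alpha)a_1)a_2$.
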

\begin{proof}
	(i) Let $(e_i)\subseteq  A $ be a bounded approximate identity. Then $$ a\cdot \alpha -f(\alpha)a=\lim_i[(a \cdot \alpha)e_i -a(\alpha \cdot e_i)]\in J_A, $$ thus $ J_0\subseteq J_A $.
	Conversely, for every $ a_1, a_2 \in A $ since $ J_0 $ is an ideal, we have
	$$ (a_1\cdot \alpha)a_2-a_1(\alpha \cdot a_2)=(a_1\cdot \alpha-f(\alpha)a_1)a_2\in J_0 ,$$ and we get $ J_A\subseteq J_0.$ Thus $ J_A= J_0 $. By \cite[Proposition 3.7.1]{BO}, exactness of the sequence 
	$$ 0\rightarrow J_A \rightarrow A\rightarrow A/J_A\rightarrow 0 $$ implies exactness of $$ 0\rightarrow J_A\otimes_{\rm max}B \rightarrow A\otimes_{\rm max}B\rightarrow A/J_A\otimes_{\rm max}B\rightarrow 0$$ hence we have an isomorphic isomorphism 
	$$ \dfrac{A\otimes_{\rm max}B}{J_A\otimes_{\rm max}B}\simeq \dfrac{A}{J_A}\otimes_{\rm max}B. $$
	Now it is enough to show that $ J_A\otimes_{\rm max}B=I_{\rm max} $. Since the action of $ \mathfrak{A} $ on $ B $ is left-trivial, $ I_{\rm max}  $ is spanned by the elements of the form 
	$ a\cdot \alpha -f(\alpha)a\otimes b $. This means that $ J_0\odot B $ spans $ I_{\rm max}  $. Thus 
	$ J_0\subseteq J_A $ implies $ I_{\rm max}\subseteq J_A\otimes_{\rm max}B.  $ Conversely $ J_A\subseteq J_0 $ gives $ J_A\odot B\subseteq I_{\rm max} $. Thus by \cite[Proposition 7.19 (iii)]{P}, $ J_A\otimes_{\rm max}B= \overline{J_A\odot B}^{\rm max}\subseteq I_{\rm max} $. Therefore, $  I_{\rm max}= J_A\otimes_{\rm max}B $ and $ A\otimes_{\mathfrak{A},\rm max}B\simeq \dfrac{A}{J_A}\otimes_{\rm max}B. $
	
	If $ B $ is exact,  exactness of the sequence 
	$$ 0\rightarrow J_A \rightarrow A\rightarrow A/J_A\rightarrow 0 $$ implies the exactness of $$ 0\rightarrow J_A\otimes_{\rm min}B \rightarrow A\otimes_{\rm min}B\rightarrow A/J_A\otimes_{\rm min}B\rightarrow 0, $$ thus 
	$$ \dfrac{A\otimes_{\rm min}B}{J_A\otimes_{\rm min}B}\simeq \frac{A}{J_A}\otimes_{\rm min}B. $$
	A similar argument as above shows that $I_{\rm min}= J_A\otimes_{\rm min}B $ and $ A\otimes_{\mathfrak{A},\rm min}B\simeq \dfrac{A}{J_A}\otimes_{\rm min}B. $
	
	(ii) The proof is similar as the above.
\end{proof}

\begin{cor}\label{3.3}
	Let $ u:A\rightarrow B  $ be an $\mathfrak{A}  $-module map. Suppose that the  actions of $\mathfrak{A}  $ on $ A $ and $ B $ are left-trivial with the same character $  f\in \mathfrak{A}^* $. Then the following statements are equivalent.

(i)  The map $ u:A\rightarrow B  $ is 	$\mathfrak{A}$-{\rm (max}$ \rightarrow${\rm max)}-tensorizing,

(ii)  The map 	 $ \tilde{u}:A/J_A \rightarrow B/J_B$ is {\rm (max}$\rightarrow${\rm max)}-tensorizing.
\end{cor}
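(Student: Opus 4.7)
Since $u$ is $\mathfrak{A}$-linear, $u(J_A)\subseteq J_B$, so $\tilde u:A/J_A\rightarrow B/J_B$ is a well-defined $*$-homomorphism (as noted before Definition~\ref{a-max}). The strategy is to transfer the tensorizing property through the isometric identifications supplied by Theorem~\ref{triv}(i). Given any $C^*$-algebra $C$ equipped with a left-trivial $\mathfrak{A}$-action by the same character $f$, Theorem~\ref{triv}(i), applied separately to $(A,C)$ and $(B,C)$, yields isometric isomorphisms
\[
\Phi_A:A\otimes_{\mathfrak{A},{\rm max}} C \xrightarrow{\sim} (A/J_A)\otimes_{\rm max} C, \qquad \Phi_B:B\otimes_{\mathfrak{A},{\rm max}} C \xrightarrow{\sim} (B/J_B)\otimes_{\rm max} C,
\]
acting as $a\otimes c+I \mapsto (a+J_A)\otimes c$ on elementary tensors.

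Next I would verify, by chasing an elementary tensor both ways (the calculation reduces to the tautology $u(a)+J_B=\tilde u(a+J_A)$), that the square
\[
\begin{CD}
A\otimes_{\mathfrak{A},{\rm max}} C @>{u\otimes_{\mathfrak{A}}{\rm id}_C}>> B\otimes_{\mathfrak{A},{\rm max}} C \\
@V{\Phi_A}V{\simeq}V @V{\Phi_B}V{\simeq}V \\
(A/J_A)\otimes_{\rm max} C @>{\tilde u \otimes {\rm id}_C}>> (B/J_B)\otimes_{\rm max} C
\end{CD}
\]
commutes, extending the identity from the dense subspace of elementary tensors to the full completions by norm continuity. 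With the square in place, both implications become routine. For (i)$\Rightarrow$(ii), given an arbitrary $C^*$-algebra $C$ as in Definition~\ref{tens}, I would endow it with the bi-trivial action $\alpha\cdot c=c\cdot\alpha=f(\alpha)c$ (which is compatible and forces $J_C=0$), apply (i) to this $C$, and conjugate the resulting bound on the top arrow through $\Phi_A$ and $\Phi_B$ to obtain max-contractivity of $\tilde u\otimes{\rm id}_C$. For (ii)$\Rightarrow$(i), the argument runs in reverse: apply (ii) to the underlying $C^*$-algebra of each admissible $C$ and transfer the bound upward via the same two isometries.

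The main obstacle is a subtle scope issue in the direction (ii)$\Rightarrow$(i): Definition~\ref{key} quantifies over \emph{all} compatible $\mathfrak{A}$-bimodule $C^*$-algebras $C$, while Theorem~\ref{triv}(i) requires $C$ itself to carry a left-trivial action by $f$. The natural reading, which I would adopt, is to interpret the corollary as restricting the test algebras to this smaller class---the context in which Theorem~\ref{triv}(i) was invoked; otherwise, one would need to extend that theorem to arbitrary compatible actions on $C$, which is a genuinely separate statement.
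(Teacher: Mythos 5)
Your argument is correct and is exactly what the paper intends: Corollary \ref{3.3} is stated with no proof at all, as an immediate consequence of Theorem \ref{triv}(i), and your commuting square through the isometric identifications $A\otimes_{\mathfrak{A},{\rm max}}C\simeq (A/J_A)\otimes_{\rm max}C$ and $B\otimes_{\mathfrak{A},{\rm max}}C\simeq (B/J_B)\otimes_{\rm max}C$ is precisely the diagram chase that justifies it. The scope caveat you raise for (ii)$\Rightarrow$(i) is a genuine imprecision in the paper rather than in your proof --- Definition \ref{key} quantifies over all compatible $\mathfrak{A}$-bimodules $C$, whereas Theorem \ref{triv}(i) only identifies the module max norm when the left action on the second factor is trivial by $f$, so your restricted reading of the test class is the one the paper implicitly uses; the only slip is calling $\tilde{u}$ a $*$-homomorphism, when it is merely a well-defined bounded linear map (well-defined because $u(a\cdot\alpha - f(\alpha)a)=u(a)\cdot\alpha - f(\alpha)u(a)\in J_B$ in the left-trivial setting), which is all the argument requires.
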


\begin{cor}\label{3.4}
		 	Let $ u:A\rightarrow B  $ be an $\mathfrak{A}  $-linear map. Suppose that the  actions of $\mathfrak{A}  $ on $ A $ and $ B $ are left-trivial with the same character $  f\in \mathfrak{A}^* $.  Then 
		 	
(i)	If   $ \| u \| \leq 1 $ and $u$ is c.p., then 	$ u $ is 	$\mathfrak{A}$-$({\rm max}\rightarrow{\rm max})$-tensorizing.	 	
		 	
(ii) If $\| u \|_{c.b.}\leq1 $, then $ u $ is $\mathfrak{A}$-$(\rm min \to \rm min)$-tensorizing. 	
\end{cor}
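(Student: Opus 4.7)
The plan is to argue directly from Definition \ref{key} by exploiting $\mathfrak A$-linearity to reduce to two classical tensorizing results in \cite{P}: c.p.\ contractions are $(\max\to\max)$-tensorizing and complete contractions are $(\min\to\min)$-tensorizing. An alternative route for part (i) would be Corollary \ref{3.3} together with the c.p.\ contractivity of $\tilde u:A/J_A\to B/J_B$ recorded in fact (ii) before Definition \ref{a-max}, but the direct approach handles both parts uniformly.

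The key common observation is that $\mathfrak A$-linearity of $u$ forces $(u\odot \mathrm{id}_C)(I_{A,C})\subseteq I_{B,C}$ for every $C^*$-algebra $C$ that is an $\mathfrak A$-bimodule with compatible actions, since on generators
$$(u\odot \mathrm{id}_C)(a\cdot\alpha\otimes c - a\otimes \alpha\cdot c)= u(a)\cdot\alpha\otimes c - u(a)\otimes \alpha\cdot c \in I_{B,C}.$$
Consequently, for any representative $x\in A\odot C$ of a class $\bar x = x+I_{A,C}$ and any $i\in I_{A,C}$, the image $(u\odot\mathrm{id}_C)(x+i)$ represents the class $u\odot_{\mathfrak A}\mathrm{id}_C(\bar x)\in B\odot_{\mathfrak A} C$.

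Applying Theorem \ref{main1} to identify the module $\gamma$-norm with the quotient of the ordinary $\gamma$-norm, for $\gamma\in\{\min,\max\}$ we then obtain
$$\|u\odot_{\mathfrak A}\mathrm{id}_C(\bar x)\|_{\gamma}\;\le\;\|(u\odot\mathrm{id}_C)(x+i)\|_{\gamma}\;\le\;\|x+i\|_{\gamma},$$
where the second inequality invokes the relevant Pisier functoriality: c.p.\ contractions are $\max$-$\max$ contractive for (i), complete contractions are $\min$-$\min$ contractive for (ii). Taking the infimum over $i\in I_{A,C}$ produces $\|u\odot_{\mathfrak A}\mathrm{id}_C(\bar x)\|_{\gamma}\le \|\bar x\|_{\gamma}$, which is the $\mathfrak A$-$(\gamma\to\gamma)$-tensorizing condition. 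The main obstacle is little more than pinning down the exact classical Pisier references; the ideal-inclusion step is entirely routine.
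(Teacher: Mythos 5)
Your argument is correct, and it takes a genuinely different route from the paper's. The paper proves (i) by passing to the induced map $\tilde u\colon A/J_A\to B/J_B$, citing \cite[Corollary 7.8]{P} to see that $\tilde u$ is $({\rm max}\to{\rm max})$-tensorizing, and then invoking Corollary \ref{3.3}; for (ii) it applies \cite[Corollary 7.2]{P} to $\tilde u$ and transports the estimate back through the identifications $\frac{A}{J_A}\odot C \simeq \frac{A\odot C}{I_{A,C}}$ coming from Theorem \ref{triv}. Both steps use the left-trivial hypothesis (and, through Theorem \ref{triv}, unitality) in an essential way. You instead stay upstairs in $A\odot C$: you push a representative $x+i$ through $u\odot{\rm id}_C$, apply the classical Pisier contractivity there, and take the infimum over $i\in I_{A,C}$, using Theorem \ref{main1} to identify the module norms with the quotient norms. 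This handles (i) and (ii) uniformly, avoids Theorem \ref{triv} altogether, and --- notably --- never uses the left-trivial hypothesis, so it actually establishes the statement for arbitrary compatible actions; that is a genuine strengthening, not an oversight on your part. The one step you should make explicit is the containment $(u\odot{\rm id}_C)(I_{A,C})\subseteq I_{B,C}$: your computation covers the linear span of the generators, but $I_{A,C}$ is defined in the paper as the \emph{ideal} they generate, and $u\odot{\rm id}_C$ is not multiplicative, so the containment for products of generators with arbitrary elements needs a separate justification. The paper itself glosses over this (it asserts fact (i) before Definition \ref{a-max} without proof and treats $I$ as the closed span of the generators in the proof of Theorem \ref{main1}), so this is a defect of the ambient setup rather than of your proof; once it is settled, your argument is complete.
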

\begin{proof}

(i) As observed above, we have $ \| \tilde{u}:A/J_A \rightarrow B/J_B \|\leq 1 $. 
Using \cite[Corollary 7.8]{P} $ \tilde{u} $ is $(\rm max \rightarrow \rm max)$-tensorizing. Therefore, by Corollary \ref{3.3}
$ u $ is 	$\mathfrak{A}$-{\rm (max$\rightarrow$max)}-tensorizing.

(ii)  
 By \cite[Corollary 7.2]{P} $\tilde{u}  $ is $(\rm min \to \rm min)$-tensorizing. Thus for each 
$ C^* $-algebra $ C $,
$$ \|\tilde{u} \odot {\rm id}_C :\frac{A}{J_A}\odot C \rightarrow \frac{B}{J_B}\odot C\| \leq1.  $$ 
Let $ I_A=I_{A,C} $ and $ I_B=I_{B,C} $. By a similar argument as in the proof of Theorem \ref{triv}, we have $ \dfrac{A}{J_A}\odot C \simeq \dfrac{A \odot C}{I_A} $ and  $ \dfrac{B}{J_B}\odot C \simeq \dfrac{B \odot C}{I_B} $. Thus
$$ \|u \odot_{\mathfrak{A}} {\rm id}_C :\dfrac{A \odot C}{I_A} \rightarrow\dfrac{B \odot C}{I_B}\| \leq 1,  $$
which means that $ u $ is $\mathfrak{A}$-$(\rm min \to \rm min)$-tensorizing. 
\end{proof}

\begin{pro}
	Suppose that the  action of $\mathfrak{A}  $ on $ A $ is left-trivial. Then
	the inclusion $ D\subseteq A $ is   $ \mathfrak{A} $-$\rm max$-injective in each of the following cases:
	
	(i)	 there is a c.p. projection $ \mathfrak{A} $-module map $ P:A\rightarrow D $ with $ \| P\|\leq1 $,
	
	(ii)	 there is a net of  c.p.  $ \mathfrak{A} $-module maps $ P_i:A\rightarrow D $ with $ \| P_i\|\leq1 $ and $ P_i\rightarrow {\rm id}_D$ in point-norm.
\end{pro}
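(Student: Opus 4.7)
The plan is to reduce to the tensorizing result of Corollary~\ref{3.4}(i) by producing a contractive left inverse (in~(i)) or an asymptotic one (in~(ii)) to the map induced by the inclusion $\iota\colon D\hookrightarrow A$. Since the $\mathfrak{A}$-action on $A$ is left-trivial with character $f$, the subalgebra $D$ inherits the same left-trivial action with the same character, so the hypotheses of Corollary~\ref{3.4}(i) apply to any contractive c.p.\ $\mathfrak{A}$-module map $A\to D$.

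For~(i), I would apply Corollary~\ref{3.4}(i) to $P$ to obtain, for each $\mathfrak{A}$-bimodule $C$ with compatible actions, a contraction $\bar P\colon A\otimes_{\mathfrak{A},{\rm max}}C\to D\otimes_{\mathfrak{A},{\rm max}}C$. The inclusion $\iota$ is an $\mathfrak{A}$-module $*$-homomorphism and, through the universal property established in Theorem~\ref{main1}, descends to a contractive $*$-homomorphism $\bar\iota\colon D\otimes_{\mathfrak{A},{\rm max}}C\to A\otimes_{\mathfrak{A},{\rm max}}C$. The identity $P\circ\iota=\mathrm{id}_D$ then implies that $\bar P\circ\bar\iota$ equals the identity on the dense subspace $D\odot_{\mathfrak{A}}C$, hence on the entire completion by continuity. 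For $y\in D\otimes_{\mathfrak{A},{\rm max}}C$ the sandwich $\|y\|=\|\bar P\bar\iota(y)\|\le\|\bar\iota(y)\|\le\|y\|$ forces $\bar\iota$ to be isometric, yielding $\mathfrak{A}$-$\rm max$-injectivity.

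For~(ii), the same corollary produces contractions $\bar P_i\colon A\otimes_{\mathfrak{A},{\rm max}}C\to D\otimes_{\mathfrak{A},{\rm max}}C$ for each index~$i$. The new ingredient is the point-norm convergence $\bar P_i\bar\iota\to\mathrm{id}$ on $D\otimes_{\mathfrak{A},{\rm max}}C$. On an elementary tensor $d\otimes c+I$ the estimate $\|(\bar P_i\bar\iota-\mathrm{id})(d\otimes c+I)\|_{\rm max}\le\|P_i(d)-d\|\cdot\|c\|\to 0$ holds, and by linearity the convergence extends to all of $D\odot_{\mathfrak{A}}C$. A standard $\varepsilon/3$ argument, using the uniform bound $\|\bar P_i\bar\iota\|\le 1$ together with density of $D\odot_{\mathfrak{A}}C$, transfers the pointwise convergence to the completion. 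Letting $i$ run through the net in $\|\bar P_i\bar\iota(y)\|\le\|\bar\iota(y)\|\le\|y\|$ and taking the limit then gives $\|y\|\le\|\bar\iota(y)\|\le\|y\|$, as required.

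The delicate point is in part~(ii): one has to extend the pointwise convergence from the algebraic module tensor product to its completion, for which the uniform-contractivity bound coming from Corollary~\ref{3.4}(i) is essential. The companion fact that $\iota\odot\mathrm{id}_C$ sends $I_{D,C}$ into $I_{A,C}$, which makes $\bar\iota$ well-defined, is immediate from the form of the generators of these ideals, so no serious obstacle arises there.
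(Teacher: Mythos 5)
Your proposal is correct and follows the same route as the paper: the paper's (one-line) proof likewise invokes Corollary \ref{3.4} to see that $P$ and the $P_i$ are $\mathfrak{A}$-$(\mathrm{max}\to\mathrm{max})$-tensorizing and declares the rest immediate, while you supply the standard sandwich/point-norm details that make ``immediate'' precise. No gaps.
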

\begin{proof}
	The result is immediate, since the maps $ P $ and $ P_i $'s are $\mathfrak{A}$-$(\rm max \rightarrow \rm max)$-tensorizing by Corollary \ref{3.4}.
\end{proof}

\begin{pro}
 Let $ N $ be a closed two-sided ideal of $ A $. If the  actions of $\mathfrak{A}  $ on $ A $ and $ B $ are right-trivial with the same character $  f $, then we have   isometric isomorphism
 $$ \frac{A}{N}\otimes_{\mathfrak{A},\rm max}B\simeq (A\otimes_{\mathfrak{A},\rm max}B)/(N\otimes_{\mathfrak{A},\rm max}B). $$
\end{pro}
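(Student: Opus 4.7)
The plan is to reduce the module tensor products to ordinary maximal tensor products via Theorem \ref{triv}(ii), and then invoke the classical exactness of $\otimes_{\rm max}$ along the short exact sequence $0\to N\to A\to A/N\to 0$. Since the right action of $\mathfrak{A}$ on $A$ is trivial with character $f$, the closed ideal $N$ is stable under the $\mathfrak{A}$-bimodule structure (an approximate-identity argument shows $\alpha\cdot n=\lim_i(\alpha\cdot n)e_i\in N$) and the quotient $A/N$ also inherits right-trivial $\mathfrak{A}$-actions with the same character $f$. Thus Theorem \ref{triv}(ii) applies to each of $N$, $A$ and $A/N$.

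Concretely, I would first invoke Theorem \ref{triv}(ii) three times to obtain isometric isomorphisms
\[
N\otimes_{\mathfrak{A},\rm max}B\simeq N\otimes_{\rm max}(B/J_B),\quad A\otimes_{\mathfrak{A},\rm max}B\simeq A\otimes_{\rm max}(B/J_B),
\]
and $(A/N)\otimes_{\mathfrak{A},\rm max}B\simeq (A/N)\otimes_{\rm max}(B/J_B)$, each induced by the map ${\rm id}\otimes q$, where $q\colon B\twoheadrightarrow B/J_B$ is the canonical quotient. Next, by \cite[Proposition 3.7.1]{BO} applied to $0\to N\to A\to A/N\to 0$ after tensoring with $B/J_B$, I obtain the exact sequence
\[
0\to N\otimes_{\rm max}(B/J_B)\to A\otimes_{\rm max}(B/J_B)\to (A/N)\otimes_{\rm max}(B/J_B)\to 0,
\]
whence an isometric isomorphism $(A/N)\otimes_{\rm max}(B/J_B)\simeq \bigl(A\otimes_{\rm max}(B/J_B)\bigr)\big/\bigl(N\otimes_{\rm max}(B/J_B)\bigr)$. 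Substituting the three isomorphisms above then yields the claim.

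The main obstacle is verifying naturality: the three applications of Theorem \ref{triv}(ii) must fit into a commutative diagram with the inclusion $N\hookrightarrow A$ and the quotient $A\twoheadrightarrow A/N$, so that $N\otimes_{\rm max}(B/J_B)$ sits inside $A\otimes_{\rm max}(B/J_B)$ in the same manner that $N\otimes_{\mathfrak{A},\rm max}B$ sits inside $A\otimes_{\mathfrak{A},\rm max}B$ (the latter inclusion coming from Corollary \ref{2.3}). Since all three isomorphisms are induced by the common map ${\rm id}\otimes q$, this compatibility is essentially formal, but writing it out cleanly requires tracking the three ideals $I_{N,B}$, $I_{A,B}$ and $I_{A/N,B}$ through the identifications. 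A secondary technical point is the unital hypothesis of Theorem \ref{triv}: the ideal $N$ need not be unital even when $A$, $B$ and $\mathfrak{A}$ are, and one must either adapt the approximate-identity argument of that proof to the non-unital case, or unitize $N$ and quotient by the adjoined unit at the end.
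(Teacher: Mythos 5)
Your proposal follows essentially the same route as the paper's proof: reduce the module tensor products to ordinary maximal tensor products with $B/J_B$ via Theorem \ref{triv}(ii), and then apply exactness of $\otimes_{\rm max}$ along $0\to N\to A\to A/N\to 0$. The paper is terser --- it invokes Theorem \ref{triv}(ii) explicitly only for the middle term and cites \cite[Proposition 7.15]{P} for max-exactness --- but your added attention to naturality of the three identifications and to the non-unital ideal $N$ only makes the argument more complete.
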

\begin{proof}
	Since $ N $ is an ideal by \cite[Proposition 7.15]{P}, the sequence 
	$$0\rightarrow N\otimes_{\rm max}\frac{B}{J_B}\rightarrow A \otimes_{\rm max}\frac{B}{J_B}\rightarrow\frac{A}{N}\otimes_{\rm max}\frac{B}{J_B}\rightarrow 0$$
	is exact. Now by assumption  and 
 Theorem \ref{triv} (ii), we have $ 	A\otimes_{\mathfrak{A},\rm max}B\simeq A\otimes_{\rm max}\frac{B}{J_B} $. Hence the sequence 
$$0\rightarrow N\otimes_{\mathfrak{A},\rm max}B\rightarrow A\otimes_{\mathfrak{A},\rm max}B\rightarrow  \frac{A}{N}\otimes_{\mathfrak{A},\rm max}B\rightarrow 0 $$
is exact. Thus	
	$ \frac{A}{N}\otimes_{\mathfrak{A},\rm max}B\simeq (A\otimes_{\mathfrak{A},\rm max}B)/(N\otimes_{\mathfrak{A},\rm max}B), $ as required.
\end{proof}

\section{Module exactness}
Let $ A $ be a $ C^* $-algebra and $ N $ be a closed two-sided  ideal of $ A $. Then for every 
$ C^* $-algebra $ B $ the sequence
$$0\rightarrow N\odot B \rightarrow A\odot B\rightarrow \frac{A}{N} \odot B \rightarrow 0 $$
is exact. The key point here is that the completion of this sequence in the spatial tensor norm fails to be exact in general. A $ C^* $-algebra $ B $ is called exact if for every $ C^* $-algebra $ A $ and two-sided closed ideal $ N $, the sequence 
$$0\rightarrow N\otimes_{\rm min} B \rightarrow A\otimes_{\rm min} B\rightarrow \frac{A}{N}\otimes_{\rm min}  B\rightarrow 0 $$
is an exact sequence.

Here we extend this definition and introduce the notion of module exactness for a $ C^* $-algebra with a compatible one-sided $ \mathfrak{A} $-action.
\begin{den}
 Let  $B $ be a $ C^* $-algebra with a compatible left $ \mathfrak{A} $-action. Then
 $ B $ is called $ \mathfrak{A} $-exact if for every $ C^* $-algebra $ A $ with a compatible right $ \mathfrak{A} $-action and every closed two-sided  ideal and right submodule $ N $ of $ A $, the sequence
 $$0\rightarrow N\otimes_{\mathfrak{A},\rm min} B \rightarrow A\otimes_{\mathfrak{A}, \rm min} B\rightarrow \frac{A}{N}\otimes_{\mathfrak{A},\rm min} B \rightarrow 0 $$
 is exact.
\end{den}

In the above situation, whenever we need to treat $A$ and $B$ as two sided $\mathfrak{A}$-modules, we fix a character $f$ on $\mathfrak{A}$ and let the right action on $B$ and the left action on $A$ be trivial actions given by $f$. 

\begin{thm} \label{exact}
A  $ C^* $-algebra $ B $ with a compatible left $ \mathfrak{A} $-action is $ \mathfrak{A} $-exact if and only if $ \frac{ B}{J_B} $ is exact.
\end{thm}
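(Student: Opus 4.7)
The plan is to deduce both directions from Theorem~\ref{triv}(ii), which, under the hypothesis that both factors carry right-trivial $\mathfrak{A}$-action via the same character $f$, identifies the module min-tensor product with the ordinary min-tensor product against $B/J_B$.

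For $(\Leftarrow)$, assume $B/J_B$ is exact. Given $A$ with a compatible right $\mathfrak{A}$-action and a closed two-sided ideal $N\lhd A$ which is also a right $\mathfrak{A}$-submodule, I would first pass to the quotients $A/J_A$, $N/J_N$, and $(A/N)/J_{A/N}$, in which the right $\mathfrak{A}$-actions become trivial via $f$. After verifying $J_N=J_A\cap N$ and $J_{A/N}=(J_A+N)/N$, the induced sequence $0\to N/J_N\to A/J_A\to (A/N)/J_{A/N}\to 0$ is still short exact. Theorem~\ref{triv}(ii) identifies each of the three module min-tensor products with $B$ as the corresponding ordinary min-tensor product against $B/J_B$, and exactness of $B/J_B$ applied to this quotient sequence yields the desired exact sequence after transporting back to the original module tensor products.

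For $(\Rightarrow)$, assume $B$ is $\mathfrak{A}$-exact. Given any $C^*$-algebra $A'$ and closed two-sided ideal $N'\lhd A'$, I would equip $A'$, $N'$, and $A'/N'$ with trivial $\mathfrak{A}$-actions on both sides via $f$. This makes $(A',N')$ a legitimate input for $\mathfrak{A}$-exactness of $B$, and now both $A'$ and $B$ are right-trivial via $f$, so Theorem~\ref{triv}(ii) gives $A'\otimes_{\mathfrak{A},\min}B\simeq A'\otimes_{\min}(B/J_B)$ and analogously for $N'$ and $A'/N'$. Translating the exact sequence produced by $\mathfrak{A}$-exactness of $B$ through these identifications yields exactness of $0\to N'\otimes_{\min}(B/J_B)\to A'\otimes_{\min}(B/J_B)\to (A'/N')\otimes_{\min}(B/J_B)\to 0$, which is precisely the exactness of $B/J_B$.

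The main obstacle I anticipate is the exactness hypothesis on $A$ (respectively $A'$) required in the min-version of Theorem~\ref{triv}(ii); in both directions the $C^*$-algebra playing the role of $A$ is not given to be exact. I expect this to be handled either by strengthening Theorem~\ref{triv}(ii) to drop this hypothesis in the trivial/mixed-action settings that actually arise here --- exploiting the fact that when $A$ carries a trivial right action the ideal $I_{A,B}$ takes the transparent form $A\odot J_B^{(0)}$ and its min-closure is $A\otimes_{\min}J_B$, so that $A\otimes_{\mathfrak{A},\min}B=(A\otimes_{\min}B)/(A\otimes_{\min}J_B)$ directly --- or by a density/approximation argument exchanging $A$ for a suitable exact model. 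This technical step is where I expect the bulk of the work to lie.
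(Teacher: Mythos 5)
Your skeleton matches the paper's: fix the character $f$, trivialize the complementary actions, prove $J_N=J_A\cap N$ and $J_{A/N}=(J_A+N)/N$ so that $0\to N/J_N\to A/J_A\to (A/N)/J_{A/N}\to 0$ is exact, and then translate the module min-tensor products into ordinary min-tensor products against $B/J_B$. The ideal bookkeeping is exactly what the paper does (it proves $J_N=J_A\cap N$ with an approximate identity of $N$ and computes the kernel of the induced map $A/J_A\to (A/N)/J_{A/N}$). The problem is the translation step, and the obstacle you flag at the end is not a deferrable technicality: it is fatal to the route you chose. Theorem \ref{triv}(ii) identifies $A\otimes_{\mathfrak{A},\min}B$ with $A\otimes_{\min}(B/J_B)$ only when the \emph{first} factor is exact, and in both of your directions the first factor ($A/J_A$, resp.\ an arbitrary $A'$) carries no exactness hypothesis. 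Your proposed repair does not close the gap: it is true that with a trivial right action on $A$ one gets $\overline{I_{A,B}}^{\rm min}=A\otimes_{\min}J_B$ and hence $A\otimes_{\mathfrak{A},\min}B\cong (A\otimes_{\min}B)/(A\otimes_{\min}J_B)$, but the further identification of this quotient with $A\otimes_{\min}(B/J_B)$ is precisely the exactness of the sequence $0\to A\otimes_{\min}J_B\to A\otimes_{\min}B\to A\otimes_{\min}(B/J_B)\to 0$, i.e.\ it again requires $A$ to be exact. For non-exact $A$ the canonical surjection $(A\otimes_{\min}B)/(A\otimes_{\min}J_B)\to A\otimes_{\min}(B/J_B)$ can have nontrivial kernel; that failure is the entire phenomenon the theorem is about, so assuming the identification is circular. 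A ``density/approximation argument exchanging $A$ for an exact model'' cannot work either, since the statement must hold for every $A$.

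The paper arranges the reduction so that the only exactness ever invoked is that of $B/J_B$. It first observes that \emph{both} $\mathfrak{A}$-actions on $B/J_B$ are trivial (the left one becomes trivial because $J_B$ contains the elements $b(\alpha\cdot b'-f(\alpha)b')$, so an approximate identity argument kills $\alpha\cdot b'-f(\alpha)b'$ modulo $J_B$), and then applies Theorem \ref{triv}(i) to the pair $(A,B/J_B)$. In part (i) the exactness hypothesis of the min-statement sits on the \emph{second} factor, which here is $B/J_B$ --- exactly the standing hypothesis in your direction $(\Leftarrow)$. This yields $A\otimes_{\mathfrak{A},\min}(B/J_B)\cong (A/J_A)\otimes_{\min}(B/J_B)$, and likewise for $N$ and $A/N$, after which your short exact sequence of quotients and the exactness of $B/J_B$ finish the argument. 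You should reorganize the proof so that every application of Theorem \ref{triv} places $B/J_B$ in the slot whose exactness is required; as written, both of your directions rest on an identification that is false for general $A$.
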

\begin{proof}
We need to verify that the exactness of the first and the third rows in the following diagram are equivalent:

\begin{center}
		\begin{tikzpicture}
			\matrix [matrix of math nodes,row sep=1cm,column sep=1cm,minimum width=1cm]
			{
				|(A)| \displaystyle 0  &   |(B)| N\otimes_{\mathfrak{A},\rm min} B & |(C)|  A\otimes_{\mathfrak{A}, \rm min} B  &  |(D)|  \frac{A}{N}\otimes_{\mathfrak{A},\rm min} B  &  |(E)|  0     \\
				|(F)|    0     &   |(G)|    N\otimes_{\rm min} B & |(H)| A\otimes_{ \rm min} B &  |(I)|  \frac{A}{N}\otimes_{\rm min} B & |(J)| 0 \\
				|(K)|   0    &   |(L)|  N\otimes_{\rm min} \frac{B}{J_B}  & |(M)| A\otimes_{ \rm min} \frac{B}{J_B}  &  |(N)| \frac{A}{N}\otimes_{\rm min} \frac{B}{J_B} & |(O)|  0 \\
						};
		\draw[->]    (A)-- node [above] {}(B);
		\draw[->]    (B)-- node [above] {}(C);
		\draw[->]    (C)-- node [above] {}(D);
		\draw[->]    (D)-- node [above] {}(E);
		\draw[->]    (F)-- node [above] {}(G);
		\draw[->]    (G)-- node [above] {}(H);
		\draw[->]    (H)-- node [above] {}(I);
		\draw[->]    (I)-- node [above] {}(J);
		\draw[->]    (K)-- node [above] {}(L);
		\draw[->]    (L)-- node [above] {}(M);
		\draw[->]    (M)-- node [above] {}(N);
		\draw[->]    (N)-- node [above] {}(O);
		\draw[->]    (B)-- node [left] {$ q_N^\mathfrak{A} $}(G);
		\draw[->]    (C)-- node [left] {$ q_A^\mathfrak{A} $}(H);
		\draw[->]    (D)-- node [left] {$ q_{A/N}^\mathfrak{A} $}(I);
		\draw[->]    (G)-- node [left] {${\rm id}_N\otimes_{\rm min} q_B $}(L);
		\draw[->]    (H)-- node [left] {${\rm id}_A\otimes_{\rm min} q_B $}(M);
		\draw[->]    (I)-- node [left] {${\rm id}_{\frac{A}{N}}\otimes_{\rm min} q_B $}(N);
		\end{tikzpicture}
	\end{center}
	
First let us assume that $\frac{B}{J_B}$ is exact. Then by definition we have
$$	 \dfrac{A\otimes_{\rm min}\frac{B}{J_B}}{N\otimes_{\rm min}\frac{B}{J_B}}\simeq \dfrac{A}{N}\otimes_{\rm min}\frac{B}{J_B}. $$
Let the right action on $B$ and the left action on $A$ be trivial actions given by the same character $f$ on $\mathfrak{A}$. Let $ I_{A,B} $ be the  ideal of
$ A\odot  \frac{B}{J_B}$ generated by the elements of the form
$ a\cdot\alpha \otimes \bar b - a\otimes \alpha\cdot \bar b, $ for  $ \alpha \in \mathfrak{A}, a \in A $, $\bar b\in  \frac{B}{J_B}$, and let $ \bar I_{A, \frac{B}{J_B}} $ be its min-closure. For $\tilde B:=B/J_B$ we have $J_{\tilde B}=0$, and  since the right $\mathfrak{A}$-action on $B$ is assumed to be trivial, both the left and right $\mathfrak{A}$-actions on $\tilde B=B/J_B$ are trivial (given by the above character $f$). Thus by Theorem \ref{triv}, we have  $ \bar I_{A, \frac{B}{J_B}}=J_A\otimes_{\rm min}   \frac{B}{J_B}$. Therefore, we have
$$A\otimes_{\mathfrak{A}, {\rm min}}\frac{B}{J_B}\cong \dfrac{A\otimes_{\rm min}\frac{B}{J_B}}{\bar I_{A, \frac{B}{J_B}}}
\cong \dfrac{A\otimes_{\rm min}\frac{B}{J_B}}{J_A\otimes_{\rm min}   \frac{B}{J_B}}
\cong \frac{A}{J_A}\otimes_{\rm min}\frac{B}{J_B}.$$
Similarly, using  closed ideals $ \bar I_{N,B} $ and $ \bar I_{\frac{A}{N},B}, $ we get  
$$N\otimes_{\mathfrak{A}, {\rm min}}\frac{B}{J_B}\cong  \frac{N}{J_N}\otimes_{\rm min}\frac{B}{J_B}, \ \ \frac{A}{N}\otimes_{\mathfrak{A}, {\rm min}}\frac{B}{J_B}\cong  \frac{A/N}{J_{A/N}}\otimes_{\rm min}\frac{B}{J_B}. $$ 

Now let $q:=q_N: A\to A/N$ be the quotient map. This induces a map 
$$\tilde q: \frac{A}{J_A}\to  \frac{A/N}{J_{A/N}};\ \ x+J_A\mapsto q(x)+ J_{A/N}.$$
Let us observe that $\tilde q$ is well-defined: If $x\in J_A$ then $x$ could be approximated in $A$ with a linear combination of elements of the form $y=(a\cdot \alpha)a^{'}-a(\alpha\cdot a^{'})$ with $q(y)=(q(a)\cdot \alpha)q(a^{'})-q(a)(\alpha\cdot q(a^{'})\in J_{A/N}$. Thus $q(x)\in J_{A/N}$. 

Next let us consider the embedding
$$\iota: \frac{N}{J_N}\hookrightarrow  \frac{A}{J_{A}};\ \ n+J_N\mapsto n+ J_{A}.$$
To see that $\iota$, which is clearly well defined, is one-one, we need to observe that $J_N=J_A\cap N$: given a generator $x=(n\cdot \alpha)n^{'}-n(\alpha\cdot n^{'})$ of $J_N$, we have $x\in N$ (as $N$ is an ideal) and $x\in J_A$ (as $N\subseteq A$), thus  $J_N\subseteq J_A\cap N$. Conversely, given $\varepsilon>0$, each 
$x\in J_A\cap N$ is $\varepsilon$-close in $A$ to a linear combination of elements of the form $y=(a\cdot \alpha)a^{'}-a(\alpha\cdot a^{'})$. Given a bounded approximate identity $(e_i)$ of $N$ with $0\leq e_i\leq 1$ in the minimal unitization of $N$, each element  $e_ixe_i$ is  $\varepsilon$-close in $A$ to a linear combination of elements  $e_iye_i=(e_ia\cdot \alpha)a^{'}e_i-e_ia(\alpha\cdot a^{'}e_i)\in J_N$  (as $N$ is an ideal in $A$), thus  $x=\lim_i e_ixe_i\in J_N$ (as $J_N$ is close), that is, $J_A\cap N\subseteq J_N.$ 

Identifying $\frac{N}{J_N}$ with its image in $ \frac{A}{J_{A}}$, we claim that $\ker(\tilde q)=\frac{N}{J_N}$: given $\bar x=x+J_A$ with $\tilde q(\bar x)=0$, we have $q(x)\in J_{A/N}$, thus, given $\varepsilon>0$,  
$q(x)$ is $\varepsilon$-close in $A/N$ to a linear combination $z$ of elements of the form $(q(a)\cdot \alpha)q(a^{'})-q(a)(\alpha\cdot q(a^{'}))$, and so there is $n\in N$ such that $x-n$ is $2\varepsilon$-close in $A$ to a linear combination $y$ of elements of the form $(a\cdot \alpha)a^{'}-a(\alpha\cdot a^{'})$. Hence $\bar x$ is $2\varepsilon$-close in $A/J_A$ to $n+J_A=\iota(n+J_N)$, that is, $\bar x\in N/J_N$, after the above identification. Conversely, $$\tilde q(\iota(n+J_N))= \tilde q(n+J_A)=q(n)+J_{A/N}=J_{A/N},$$ 
and the formula for the kernel is proved. This means that 
$$\frac{A/J_A}{N/J_N}\cong  \frac{A/N}{J_{A/N}},$$
canonically. Finally we have
$$\dfrac{A\otimes_{\mathfrak{A}, {\rm min}}B}{N\otimes_{\mathfrak{A}, {\rm min}}B}\cong \dfrac{A/J_A\otimes_{\rm min}\frac{B}{J_B}}{N/J_N\otimes_{\rm min}\frac{B}{J_B}}\cong \dfrac{A/J}{N/J_N}\otimes_{\rm min}\frac{B}{J_B} \cong\dfrac{A/N}{J_{A/N}}\otimes_{\rm min}\frac{B}{J_B}
\cong \dfrac{A}{N}\otimes_{\mathfrak{A}, {\rm min}}B.,$$
that is, $B$ is $\mathfrak{A}$-exact. The converse is proved by going over the above argument in a reverse direction. 
\end{proof}

To provide a concrete example, let us discuss $C^*$-algebras of inverse semigroups. Following Duncan and Paterson \cite{DP}, let us consider  an inverse semigroup $ S $ and left regular representation $ \lambda: S\rightarrow \mathcal{B}(\ell^2(S))$. Then the completion of the image of $\ell^1(S)   $ in the representation induced by $  \lambda $ is called the reduced 
$ C^* $-algebra of $ S $, denoted by $ C^*_r(S) $. Let the full $ C^* $-algebra of $ S $  be denoted by $ C^*(S) $.

Let $ E $ be be  the set of idempotents of $ S $. Let $ \ell^1(E) $ act on $ \ell^1(S)  $ by multiplication from right and trivially from left, that is,
$$ \delta_s\cdot \delta_e= \delta_s\ast \delta_e=\delta_{se}\quad  \hbox{and} \quad \delta_e
   \cdot\delta_s=\delta_s \qquad(e\in E, s\in S). $$
These actions continuously extend to actions of $ C^*(E) $ on $ C^*(S) $ and $ C_r^*(S) $,  where the left actions are trivial \cite{AR}. Let $ \approx $ be an equivalence relation on $ S $ given by
$$ s\approx t \Leftrightarrow \delta_s-\delta_t\in J_0.  $$
Here the  ideal $ J_0 $  is the  linear span of the set $\{ \delta_{set}-\delta_{st}:s,t \in S, e\in E \}\subseteq \ell^1(S).  $ Since $ E $ is a semilattice, the discussion before \cite[Theorem 2.4]{ABE} 
shows that $ G_S:=S/\approx $ is a discrete group, that is known to be  the maximal group homomorphic image of $ S $.

Following the group case, we  define exact inverse semigroups as follows.
\begin{den}
An inverse semigroup $ S $ is called exact if  $ C_r^*(G_S) $ is an exact $ C^* $-algebra.
\end{den}
\begin{example}
Let $ S $ be an inverse semigroup with the set of idempotents $ E $. Suppose that $ E $ acts on $ S $ by multiplication from right and trivially from left as above.
Then the following are equivalent;

(i) $ S $ is an exact semigroup.

(ii) $ C_r^*(S) $ is $ C^*(E) $-exact.

\end{example}
\begin{proof}
Let $ J $  be the closure of $ J_0$ in $ C_r^*(S) $. The map $  \ell^1(S)\rightarrow   C_r^*(G_S); \delta_s\mapsto \delta_{[s]} $ is a continuous *-homomorphism with dense range $ \ell^1(G_S) $ so it lifts to a surjective *-homomorphism $ \varphi:   C_r^*(S)\rightarrow  C_r^*(G_S)$ with $ \ker (\varphi)=J $. Therefore, $ C_r^*(G_S)\simeq \dfrac{ C_r^*(S)}{J} $.
Now the result follows from Theorem \ref{exact}. 
\end{proof}

\section{decomposable module maps}

Let $A$ be an operator system and $B$ a
C*-algebra, both having $\mathfrak A$-bimodule structures as above. We will denote by $D_{\mathfrak A}(A,B)$ the set of all $\mathfrak A$-{\it decomposable} maps
$u : A \to B$, that is, the module maps that are in the linear span of $CP_{\mathfrak A}(A,B)$ of c.p. $\mathfrak A$-module maps. Following  Haagerup \cite{h} we  define 
$$\|u\|_{{\mathfrak A}-{\rm dec}} := \inf\max\{\|S1\|,\|S2\|\},$$ where the infimum runs over all module maps $S_1,S_2 \in CP_{\mathfrak{A}}(A,B)$ such that 
$$V:=\begin{pmatrix}
S_1& u\\
u_*& S_2
\end{pmatrix}
\in CP(A, \mathbb M_2(B)),
$$
where $u_*(a)=u(a^*)^*$. Note that $V$ is automatically a module map in the canonical module structure of $\mathbb M_2(B)$. By an argument similar to \cite[Lemma 6.2]{P},  $D_{\mathfrak A}(A,B)$ is a Banach space. 
We may ignore the module structure and compare this norm to $\|.\|_{{\rm dec}}$. Since in the latter case the infimum is taken over all c.p. maps $S_1$ and $S_2$ (not just the module c.p. maps), it is clear that $\|u\|_{{\mathfrak A}-{\rm dec}}\geq\|u\|_{{\rm dec}}$. In particular, 
\begin{equation}\label{6.1}
 \|u\|_{{\mathfrak A}-{\rm dec}}\geq\|u\|_{{\rm cb}}=\|u\|,
\end{equation}
for a c.p. map $u$, by \cite[Lemma 6.5(i)]{P}.  On the other hand, when $u$ is c.p., $\begin{pmatrix}
	S_1& u\\
	u_*& S_2
\end{pmatrix}
\in CP(A, \mathbb M_2(B)),$ thus 

\vspace{.2cm}
$(i)$ if $u$ is c.p., then
$$\|u\|_{{\mathfrak A}-{\rm dec}} = \|u\|_{\rm cb}=\|u\|. $$

\vspace{.2cm} 
Similarly, as in \cite[Lemma 6.5(ii)-(iii)]{P}, we have
 
\vspace{.2cm}
$(ii)$ if $u = u_*$ then
 $$\|u\|_{{\mathfrak A}-{\rm dec}} = \inf\{\|u_1 + u_2\|: u_1,u_2 \in CP_{\mathfrak A}(A,B), u = u_1 - u_2\}. $$
 
\vspace{.2cm} 
$(iii)$ if $\bar u= \begin{pmatrix}
	0& u\\
	u_*& 0
\end{pmatrix}$, then  $u \in D_{\mathfrak A}(A,B)$ if and only if $\bar u \in D_{\mathfrak A}(A,\mathbb M_2(B))$ and  $\|u\|_{{\mathfrak A}-{\rm dec}}=\|\bar u\|_{{\mathfrak A}-{\rm dec}}$. 
 
\vspace{.2cm}
Also, by the same argument as in  \cite[Lemma 6.6]{P},
\vspace{.2cm}

$(iv) D_{\mathfrak A}(A,B) \subseteq CB_{\mathfrak A}(A,B)$ and
$\|u\|_{{\mathfrak A}-{\rm dec}}\geq \|u\|_{{\rm cb}}$, for a c.b. module map  $u$, 
  \vspace{.2cm}
  
$(v)$ if  $u \in D_{\mathfrak A}(A,B)$ and $v \in D_{\mathfrak A}(B,C)$ then $vu \in D_{\mathfrak A}(A,C)$ and
$$\|vu\|_{{\mathfrak A}-{\rm dec}}\leq\|u\|_{{\mathfrak A}-{\rm dec}}\|v\|_{{\mathfrak A}-{\rm dec}}.$$

Now consider the situation that $u:A\to M$ is a module map from a $C^*$-algebra to a von Neumann algebra, both with $\mathfrak A$-bimodule structures. For $M$, we  the left and right module actions are assumed to be $w^*$-continuous. This makes $M_*\leq M^*$ an $\mathfrak{A}$-submodule. Now since $u^*:M^*\to A^*$ is also a module map in the canonical module structure of the dual spaces, so is its restriction to $M_*$. Taking once more the adjoint of this restricted map, we get a module map $\ddot{u}:  A^{**}\to M$. By the same argument as in \cite[Lemma 6.9]{P}, 
\vspace{.2cm}

$(vi)$\  $u \in D_{\mathfrak A}(A,M)$ if and only if $\ddot u \in D_{\mathfrak A}(A^{**},M)$ and  $\|u\|_{{\mathfrak A}-{\rm dec}}=\|\ddot u\|_{{\mathfrak A}-{\rm dec}}$.  
\vspace{.2cm}

\begin{pro}\label{61}
Let $ A, B $ and $ C $ be  $ C^* $-algebras. For each $ u\in  D_{\mathfrak A}(A,B) $ and $ x\in C\odot_{\mathfrak A} A$ we have 
$$ \|({\rm id}_C\odot_{\mathfrak A}u)(x)\|_{C\otimes_{{\mathfrak A}-{\rm max}}B}\leq\| u \|_{{\mathfrak A}-{\rm dec}} \|x \|_{C\otimes_{{\mathfrak A}-{\rm max}}A}. $$
Moreover, the map ${\rm id}_C\otimes_{\mathfrak A,\rm max }u:C\otimes_{{\mathfrak A}-{\rm max}}A\rightarrow C\otimes_{{\mathfrak A}-{\rm max}}B $ is decomposable and its norm satisfies
\begin{equation}\label{6.11}
 \|({\rm id}_C\otimes_{\mathfrak A,\rm max}u)\|_{D_{\mathfrak A}(C\otimes_{{\mathfrak A}-{\rm max}}A,C\otimes_{{\mathfrak A}-{\rm max}}B)}\leq\| u \|_{{\mathfrak A}-{\rm dec}}.
\end{equation}
\end{pro}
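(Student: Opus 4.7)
The plan is to follow the standard argument for the decomposable version of tensorizing (compare \cite[Lemma 7.12]{P}) adapted to the module setting. Fix $\varepsilon>0$ and, by definition of $\|u\|_{\mathfrak A-{\rm dec}}$, choose c.p.\ $\mathfrak A$-module maps $S_1,S_2:A\to B$ such that the matrix
$$V=\begin{pmatrix}S_1 & u\\ u_* & S_2\end{pmatrix}\in CP(A,\mathbb M_2(B))$$
satisfies $\max\{\|S_1\|,\|S_2\|\}\leq \|u\|_{\mathfrak A-{\rm dec}}+\varepsilon$. As noted in the definition of $\|\cdot\|_{\mathfrak A-{\rm dec}}$, $V$ is automatically an $\mathfrak A$-module map in the entrywise module structure on $\mathbb M_2(B)$.

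The key step is to tensor each of $S_1,S_2,u,V$ with ${\rm id}_C$ and push the result through the $\mathfrak A$-module max tensor product. For any c.p.\ $\mathfrak A$-module map $\varphi:A\to D$, the classical fact \cite[Corollary 7.8]{P} gives that ${\rm id}_C\otimes\varphi:C\otimes_{\rm max}A\to C\otimes_{\rm max}D$ is c.p.\ with norm $\|\varphi\|$. The module hypothesis on $\varphi$ forces ${\rm id}_C\otimes\varphi$ to send the defining ideal $I_{C,A}$ into $I_{C,D}$, because
$$c\cdot\alpha\otimes a- c\otimes\alpha\cdot a\;\longmapsto\; c\cdot\alpha\otimes\varphi(a)-c\otimes\varphi(\alpha\cdot a)=c\cdot\alpha\otimes\varphi(a)-c\otimes\alpha\cdot\varphi(a)\in I_{C,D}.$$
Invoking Proposition \ref{ideal} to realize the module max tensor products as quotients by (the max-closures of) these ideals, the map descends to a c.p.\ $\mathfrak A$-module map
$${\rm id}_C\otimes_{\mathfrak A,{\rm max}}\varphi : C\otimes_{\mathfrak A,{\rm max}}A \to C\otimes_{\mathfrak A,{\rm max}}D$$
of norm at most $\|\varphi\|$.

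Applying this with $\varphi\in\{S_1,S_2,u,V\}$, and using the identification $C\otimes_{\mathfrak A,{\rm max}}\mathbb M_2(B)\cong\mathbb M_2(C\otimes_{\mathfrak A,{\rm max}}B)$ (which follows from the corresponding classical identification together with the fact that the defining ideal matches entrywise), we obtain c.p.\ $\mathfrak A$-module maps ${\rm id}_C\otimes_{\mathfrak A,{\rm max}}S_i$ of norm $\leq \|S_i\|$, and a c.p.\ module map
$${\rm id}_C\otimes_{\mathfrak A,{\rm max}}V = \begin{pmatrix}{\rm id}_C\otimes_{\mathfrak A,{\rm max}}S_1 & {\rm id}_C\otimes_{\mathfrak A,{\rm max}}u\\ ({\rm id}_C\otimes_{\mathfrak A,{\rm max}}u)_* & {\rm id}_C\otimes_{\mathfrak A,{\rm max}}S_2\end{pmatrix}$$
from $C\otimes_{\mathfrak A,{\rm max}}A$ into $\mathbb M_2(C\otimes_{\mathfrak A,{\rm max}}B)$. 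This matrix witnesses that ${\rm id}_C\otimes_{\mathfrak A,{\rm max}}u$ belongs to $D_{\mathfrak A}(C\otimes_{\mathfrak A,{\rm max}}A,C\otimes_{\mathfrak A,{\rm max}}B)$ with $\mathfrak A$-dec norm at most $\max\{\|S_1\|,\|S_2\|\}\leq \|u\|_{\mathfrak A-{\rm dec}}+\varepsilon$. Letting $\varepsilon\to 0$ yields \eqref{6.11}, and the pointwise inequality in the first assertion follows by applying \eqref{6.1} to ${\rm id}_C\otimes_{\mathfrak A,{\rm max}}u$.

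The main obstacle is the bookkeeping needed to transfer the classical tensorizing statement to its module analog: one must check carefully that $({\rm id}_C\otimes\varphi)(I_{C,A})\subseteq I_{C,D}$ for $\mathfrak A$-module maps $\varphi$, and justify the matrix identification $C\otimes_{\mathfrak A,{\rm max}}\mathbb M_2(B)\cong\mathbb M_2(C\otimes_{\mathfrak A,{\rm max}}B)$ as $C^*$-algebras with compatible $\mathfrak A$-bimodule structure. Both are routine applications of Proposition \ref{ideal}, but they must be made precise to transport the c.p.\ property and the norm estimate through the quotient.
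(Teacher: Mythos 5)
Your argument is correct and is essentially the paper's own proof (both following Pisier's Proposition 6.11): tensor the witnessing c.p.\ matrix $V$ with ${\rm id}_C$, identify $C\otimes_{\mathfrak A,{\rm max}}\mathbb M_2(B)$ with $\mathbb M_2(C\otimes_{\mathfrak A,{\rm max}}B)$, read off the decomposition of ${\rm id}_C\otimes_{\mathfrak A,{\rm max}}u$, and optimize over $S_1,S_2$. The only difference is that you spell out the descent of ${\rm id}_C\otimes\varphi$ to the module quotient (which the paper records as a ``well-known fact'' before Definition 2.6); note only that your check is on the generators of $I_{C,A}$, whereas $I_{C,A}$ is the ideal they generate and $\varphi$ is not multiplicative, so that verification deserves one more line.
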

\begin{proof}
The proof is based on the argument in \cite[Proposition 6.11]{P} and we give the details for the sake of completeness.	
By (\ref{6.1}), $\|{\rm id}_C\odot_{\mathfrak A}u \|\leq\| {\rm id}_C\odot_{\mathfrak A}u\|_{{\mathfrak A}-{\rm dec}} $ and so we only need  to prove (\ref{6.11}).  Let $ u\in  D_{\mathfrak A}(A,B) $ and let
 $V=\begin{pmatrix}
	S_1& u\\
	u_*& S_2
\end{pmatrix}
\in CP(A, \mathbb M_2(B)).$ Then the map ${\rm id}_C\otimes_{\mathfrak A,\rm max }V:C\otimes_{{\mathfrak A}-{\rm max}}A\rightarrow C\otimes_{{\mathfrak A}-{\rm max}}\mathbb M_2(B)\simeq\mathbb M_2(C\otimes_{{\mathfrak A}-{\rm max}}B)  $ is c.p. and we may view it as 
 $$ {\rm id}_C\otimes_{\mathfrak A,\rm max }V=\begin{pmatrix}
	{\rm id}_C\otimes_{\mathfrak A,{\rm max}}S_1& {\rm id}_C\otimes_{\mathfrak A,{\rm max}}u\\
{\rm	id}_C\otimes_{\mathfrak A,{\rm max}}u_*&  {\rm id}_C\otimes_{\mathfrak A,{\rm max}}S_2.
\end{pmatrix}
$$ Since ${\rm	id}_C\otimes_{\mathfrak A,{\rm max}}u_*=(	{\rm id}_C\otimes_{\mathfrak A,{\rm max}}u)_*  $ and $ u\in  D_{\mathfrak A}(A,B) $, we obtain ${\rm id}_C\otimes_{\mathfrak A,{\rm max}}u\in D_{\mathfrak A}(C\otimes_{{\mathfrak A}-{\rm max}}A,C\otimes_{{\mathfrak A}-{\rm max}}B) $ and 
$$  \|{\rm id}_C\otimes_{\mathfrak A,{\rm max}}u\|_{{\mathfrak A}-{\rm dec} }\leq {\rm max} \{ \| {\rm id}_C\otimes_{\mathfrak A,{\rm max}}S_1 \|, \|{\rm id}_C\otimes_{\mathfrak A,{\rm max}}S_2\|\} \leq {\rm max} \{ \| S_1 \|, \|S_2\|\}.$$
Taking the infimum over all possible module maps $S_1,S_2  $ we obtain (\ref{6.11}).
\end{proof}

\begin{cor}
	Let $ u_i\in  D_{\mathfrak A}(A_i,B_i), i=1,2 $. Then $  u_1\odot_{\mathfrak A} u_2 $ extends to a $ \mathfrak A $-decomposable map $ u_1\otimes_{\mathfrak A,\rm max} u_2 $ such that 
	$$\|  u_1\otimes_{\mathfrak A,\rm max} u_2\|_{{\mathfrak A}-{\rm dec}}\leq\|u_1\|_{{\mathfrak A}-{\rm dec}}\|u_2\|_{{\mathfrak A}-{\rm dec}}  $$
\end{cor}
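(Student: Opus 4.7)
The plan is to reduce the statement to Proposition \ref{61} by factoring $u_1 \otimes_{\mathfrak A,\rm max} u_2$ through an intermediate space. I would write
$$ u_1 \otimes_{\mathfrak A,\rm max} u_2 = (u_1 \otimes_{\mathfrak A,\rm max} {\rm id}_{B_2}) \circ ({\rm id}_{A_1} \otimes_{\mathfrak A,\rm max} u_2), $$
as a composition $A_1 \otimes_{\mathfrak A,\rm max} A_2 \to A_1 \otimes_{\mathfrak A,\rm max} B_2 \to B_1 \otimes_{\mathfrak A,\rm max} B_2$, and deal with each factor separately.

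First I would apply Proposition \ref{61} with $C := A_1$ and $u := u_2$, which directly yields that ${\rm id}_{A_1} \otimes_{\mathfrak A,\rm max} u_2$ is $\mathfrak A$-decomposable with norm at most $\|u_2\|_{{\mathfrak A}-{\rm dec}}$. For the first factor $u_1 \otimes_{\mathfrak A,\rm max} {\rm id}_{B_2}$, I would observe that the argument of Proposition \ref{61} is insensitive to which side the identity is placed on: starting from a representative $V_1 = \begin{pmatrix} S_1 & u_1 \\ u_{1*} & S_2 \end{pmatrix} \in CP_{\mathfrak A}(A_1,\mathbb M_2(B_1))$ of $u_1$, the map $V_1 \otimes_{\mathfrak A,\rm max} {\rm id}_{B_2}$ is c.p.\ into $\mathbb M_2(B_1) \otimes_{\mathfrak A,\rm max} B_2 \cong \mathbb M_2(B_1 \otimes_{\mathfrak A,\rm max} B_2)$ and has $u_1 \otimes_{\mathfrak A,\rm max} {\rm id}_{B_2}$ in its off-diagonal entries; taking the infimum over all admissible $V_1$ gives $\|u_1 \otimes_{\mathfrak A,\rm max} {\rm id}_{B_2}\|_{{\mathfrak A}-{\rm dec}} \leq \|u_1\|_{{\mathfrak A}-{\rm dec}}$.

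Finally, I would combine these two estimates using property $(v)$ listed just before Proposition \ref{61}: since the $\mathfrak A$-decomposable norm is submultiplicative under composition of module maps, the two bounds multiply to produce the required inequality $\|u_1 \otimes_{\mathfrak A,\rm max} u_2\|_{{\mathfrak A}-{\rm dec}} \leq \|u_1\|_{{\mathfrak A}-{\rm dec}}\|u_2\|_{{\mathfrak A}-{\rm dec}}$. The main technical point I expect to have to check carefully is the symmetric analogue of Proposition \ref{61} for $u \otimes_{\mathfrak A,\rm max} {\rm id}_C$, which in turn reduces to showing that tensoring a c.p.\ $\mathfrak A$-module map by the identity on the right yields a c.p.\ map in the module max tensor norm; this is a routine consequence of the universal property encoded in Definition \ref{a-max}, but it is the only step not already packaged in a quoted result.
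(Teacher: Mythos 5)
Your proposal is correct and is essentially the argument the paper intends: the corollary is stated without proof precisely because it follows from the factorization $u_1\otimes_{\mathfrak A,\rm max}u_2=(u_1\otimes_{\mathfrak A,\rm max}{\rm id}_{B_2})\circ({\rm id}_{A_1}\otimes_{\mathfrak A,\rm max}u_2)$, Proposition \ref{61}, and the submultiplicativity in property $(v)$. You are also right to flag that the symmetric version of Proposition \ref{61} (identity tensored on the right) must be checked separately, since the ideal $I_{A,B}$ is not symmetric in its two arguments; the same matrix argument goes through because $u_1$ intertwines the right $\mathfrak A$-actions, so $u_1\odot{\rm id}$ descends to the module quotient.
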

When the map $ u $ has finite rank then a stronger result holds and one can go $\rm min\rightarrow \rm max $.
\begin{pro}
	Let $ u\in  D_{\mathfrak A}(A,B) $ be a finite rank module map. Then for every  $ C^* $-algebra
	$ C $ and each $ x \in C\odot_{\mathfrak A}A $ we have
	$$ \|({\rm id}_C\odot_{\mathfrak A}u)(x)\|_{C\otimes_{{\mathfrak A}-{\rm max}}B}\leq\| u \|_{{\mathfrak A}-{\rm dec}} \|x \|_{C\otimes_{{\mathfrak A}-{\rm min}}A}. $$
\end{pro}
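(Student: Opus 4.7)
The plan is to follow the proof of \cite[Proposition 6.12]{P} in the module context, with two key ingredients: a factorization of finite rank decomposable maps through a finite-dimensional $C^*$-algebra, and the automatic $\mathfrak{A}$-nuclearity of such an algebra. The inequality sits between Proposition \ref{61} (which gives $\max\to\max$ with the decomposable norm) and Lemma \ref{123} together with item $(i)$ of this section (which yields $\min\to\min$ for c.p. module maps); the finite rank hypothesis is exactly what lets us bridge the min and max sides through a finite-dimensional middle term.

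First I would establish a factorization lemma: any finite rank $u\in D_{\mathfrak A}(A,B)$ admits a decomposition $u=w\circ v$ through a finite-dimensional $C^*$-algebra $N$ with a compatible $\mathfrak A$-action, where $v:A\to N$ is a c.p. $\mathfrak A$-module map and $w:N\to B$ is an $\mathfrak A$-decomposable module map, with $\|v\|\cdot\|w\|_{{\mathfrak A}\text{-}{\rm dec}}$ arbitrarily close to $\|u\|_{{\mathfrak A}\text{-}{\rm dec}}$. Starting from a c.p. matrix
$$V=\begin{pmatrix} S_1 & u\\ u_* & S_2\end{pmatrix}\in CP_{\mathfrak A}(A,\mathbb M_2(B))$$
witnessing the decomposable norm, I would apply Stinespring to $V$ and then truncate using a finite-rank projection associated with the (finite-dimensional) range of $u$, producing a c.p. module map into a finite matrix corner $N$ followed by a decomposable module map back into $\mathbb M_2(B)$; reading off the $(1,2)$-entry yields the factorization of $u$ itself.

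Next I would record that for any $C^*$-algebra $C$ with compatible $\mathfrak A$-action the pair $(C,N)$ is automatically $\mathfrak A$-nuclear. Indeed, $N$ finite-dimensional implies $N$ nuclear, hence $C\otimes_{\rm min}N=C\otimes_{\rm max}N$; combined with Theorem \ref{main1}, which realises both module tensor norms as quotients of the classical ones by the closure of the same ideal $I_{C,N}$, this gives an isometric identification $C\otimes_{\mathfrak A,{\rm min}}N=C\otimes_{\mathfrak A,{\rm max}}N$. Chaining the three contractive maps
$$C\otimes_{\mathfrak A,{\rm min}}A\xrightarrow{\mathrm{id}_C\otimes v} C\otimes_{\mathfrak A,{\rm min}}N=C\otimes_{\mathfrak A,{\rm max}}N\xrightarrow{\mathrm{id}_C\otimes w} C\otimes_{\mathfrak A,{\rm max}}B,$$
where the first is bounded by $\|v\|$ (via Lemma \ref{123} and $(i)$) and the last by $\|w\|_{{\mathfrak A}\text{-}{\rm dec}}$ (via Proposition \ref{61}), gives the estimate with constant $\|v\|\cdot\|w\|_{{\mathfrak A}\text{-}{\rm dec}}$, and taking the infimum over factorizations recovers $\|u\|_{{\mathfrak A}\text{-}{\rm dec}}$.

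The hard step is the first one. In the non-module setting, Stinespring truncation at a finite-rank projection is standard, but here one must ensure the intermediate algebra $N$ inherits a compatible $\mathfrak A$-action and that both $v$ and $w$ are module maps. The delicate point is choosing the truncating projection $\mathfrak A$-invariantly, so that the compression of $V$ still intertwines the actions; this should be achievable by taking the projection onto an $\mathfrak A$-invariant finite-dimensional subspace in the Stinespring dilation, using the compatibility axioms and the fact that the range of $u$ is finite-dimensional to keep the enlargement finite-dimensional.
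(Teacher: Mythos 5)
Your overall architecture (factor $u$ through a finite-dimensional $C^*$-algebra $N$, use the automatic nuclearity of $N$ to cross from min to max in the middle) is a legitimate strategy in principle, but it rests entirely on the factorization lemma, which you do not prove and which I do not believe can be established along the lines you sketch. The Stinespring truncation fails at exactly the point you flag: writing $V=W^{*}\pi(\cdot)W$, the compression $a\mapsto P\pi(a)P$ with $P$ a finite-rank projection recovers $V$ only if $PW=W$, i.e. only if the range of $W$ is finite-dimensional; this has nothing to do with the rank of $u$ (the diagonal witnesses $S_1,S_2$ need not have finite rank, and even a rank-one unital c.p. map has an isometric Stinespring $W$ with infinite-dimensional range when the target acts on an infinite-dimensional space). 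An exact factorization of a finite-rank decomposable map through a matrix algebra with $\|v\|\,\|w\|_{{\mathfrak A}-{\rm dec}}$ close to $\|u\|_{{\mathfrak A}-{\rm dec}}$ amounts to identifying the decomposable norm with a nuclearity-type factorization norm on finite-rank maps, which is a genuinely delicate question already without the module structure; adding the requirement that $N$ carry a compatible $\mathfrak A$-action and that $v,w$ be module maps only makes it harder. So as written the proof has a gap at its first and crucial step. (The remaining steps --- the $\mathfrak A$-nuclearity of $(C,N)$ for finite-dimensional $N$ via Theorem \ref{main1}, and the chaining of contractions via Lemma \ref{123} and Proposition \ref{61} --- are fine modulo that lemma.)

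The paper avoids factorization altogether. Since $u$ has finite-dimensional range $F\subseteq B$, the min and max norms are equivalent on $C\odot_{\mathfrak A}F$, so ${\rm id}_C\odot_{\mathfrak A}u$ is at least \emph{bounded} from $C\otimes_{{\mathfrak A}-{\rm min}}A$ to $C\otimes_{{\mathfrak A}-{\rm max}}B$; then, because the canonical surjection $\tilde q\colon C\otimes_{{\mathfrak A}-{\rm max}}A\to C\otimes_{{\mathfrak A}-{\rm min}}A$ is a quotient $*$-homomorphism and hence carries the open unit ball onto the open unit ball, the norm of the min-to-max map equals that of the max-to-max map, which is at most $\| u \|_{{\mathfrak A}-{\rm dec}}$ by Proposition \ref{61}. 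The finite-rank hypothesis is used only to secure boundedness; the metric-surjection trick then yields the sharp constant, with no intermediate algebra needed.
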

\begin{proof}
 The  $\rm min$ and $ \rm max $ norms are equivalent on $ C\odot_{\mathfrak A}F $ for any finite dimensional subspace $ F\subseteq B $. Since the rank of $ u $ is finite, the map 
 $$ {\rm id}_C\odot_{\mathfrak A}u:C\otimes_{{\mathfrak A}-{\rm min}}A\rightarrow C\otimes_{{\mathfrak A}-{\rm max}}B$$ is bounded. Also $$ \|  {\rm id}_C\odot_{\mathfrak A}u:C\otimes_{{\mathfrak A}-{\rm max}}A\rightarrow C\otimes_{{\mathfrak A}-{\rm max}}B  \|\leq \|u\|_{{\mathfrak A}-{\rm dec}},$$ by (\ref{6.11}). Let $ q:C\otimes_{\rm max}A \rightarrow C\otimes_{\rm min}A $ be the canonical surjection and let $ I=I_{C,A} $. Since $ \overline{I}^{\rm max}\subseteq \overline{I}^{\rm min} $, $ q $ induces a surjection $ \tilde{q}:C\otimes_{{\mathfrak A}-{\rm max}}A\rightarrow C\otimes_{{\mathfrak A}-{\rm min}}A.  $
 Taking the open unit ball onto the open unit ball, thus it follows that
 \begin{align*}
 	\|{\rm id}_C\odot_{\mathfrak A}u:C\otimes_{{\mathfrak A}-{\rm min}}A\rightarrow C\otimes_{{\mathfrak A}-{\rm max}}B  \|&= \| {\rm id}_C\odot_{\mathfrak A}u:C\otimes_{{\mathfrak A}-{\rm max}}A\rightarrow C\otimes_{{\mathfrak A}-{\rm max}}B \| \\&\leq \|u\|_{{\mathfrak A}-{\rm dec}}. 
 \end{align*} 
\end{proof}

Next, let $ \rho:\mathfrak A\rightarrow  B(\mathcal{H}) $ be a *-homomorphism. We consider a two-sided action of $ \mathfrak A $ on $ \mathcal{H} $ given by $ \alpha\cdot \xi=\xi \cdot \alpha=\rho(\alpha)\xi $ for each  $ \xi \in\mathcal{H}, \alpha \in \mathfrak A. $
This gives a compatible action of  $ \mathfrak A $ on $  B(\mathcal{H}) $ via $ (T\cdot\alpha)(\xi) =(\alpha\cdot T)(\xi)=T(\alpha\cdot \xi) $. Now if  a von Neumann algebra $ M\subseteq B(\mathcal{H}) $ is an $ \mathfrak A $-bimodule with compatible action, then its commutant $ M^\prime\subseteq  B(\mathcal{H})$ is a closed submodule. 

In what follows we further assume that $ A $ is a unital 
 $ C^* $-algebra and a commutative $ \mathfrak A $-bimodule, that is, $$ \alpha\cdot a=a\cdot \alpha \ \ (a\in A, \alpha\in \mathfrak A).$$
 We let $ E\subseteq A $ be a closed submodule and Suppose that $ u:E\rightarrow M $ is a bounded $ \mathfrak A $-linear map. Then $ \hat{u}:M^\prime\odot_{ \mathfrak A}E\rightarrow  B(\mathcal{H})$ defined by 
$\hat{u}(x^\prime\otimes x)=x^\prime u(x)  $ for $x^\prime\in M^\prime, x\in M $ is an $ \mathfrak A $-linear map. 
Finally, we take $ M^\prime\otimes^{{\rm max}}_{ \mathfrak A}E $ to be the closer of $ M^\prime\odot_{ \mathfrak A}E $ in $ M^\prime\otimes_{{\mathfrak A}-{\rm max}}A $.
\begin{lem}\label{6.20}
	With the above notations, 
	 $ \hat{u}:M^\prime\odot_{ \mathfrak A}E\rightarrow  B(\mathcal{H})$ extends to a c.b. map
	on $ M^\prime\otimes^{{\rm max}}_{ \mathfrak A}E $ with $ \|\hat{u}:  M^\prime\otimes^{{\rm max}}_{ \mathfrak A}E\rightarrow  B(\mathcal{H})\|_{c.b.}\leq1$
	if and only if there is $\tilde{u}\in  D_{\mathfrak A}(A,B)  $ with $\|\tilde{u}  \|_{{\mathfrak A}-{\rm dec}}\leq1  $ extending $ u $.
\end{lem}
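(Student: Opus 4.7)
My plan is to mirror Haagerup's classical characterisation of decomposable extensions (\cite[Theorem~6.13]{P}) in the $\mathfrak{A}$-module framework developed above. The statement naturally splits into two implications; one is a direct consequence of the Fubini-type estimate in Proposition~\ref{61}, while the converse requires a module-aware combination of the Arveson--Wittstock extension theorem with Haagerup's identification of the dec and cb norms for maps into $B(\mathcal{H})$.

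For the backward direction, I would start from $\tilde{u}\in D_{\mathfrak{A}}(A,M)$ extending $u$ with $\|\tilde{u}\|_{\mathfrak{A}-\rm dec}\le 1$, apply Proposition~\ref{61} with $C:=M'$ to obtain a cb contraction
$$\mathrm{id}_{M'}\otimes_{\mathfrak{A},\rm max}\tilde{u}:M'\otimes_{\mathfrak{A}-\rm max}A\longrightarrow M'\otimes_{\mathfrak{A}-\rm max}M,$$
and compose this with the multiplication map $\mu(x'\otimes y):=x'y$. The latter is a well-defined contractive $*$-homomorphism, because by the universal description of the $\mathfrak{A}$-max tensor product in Theorem~\ref{main1}, such a map is determined by a pair of $\mathfrak{A}$-module $*$-representations with commuting ranges whose joint action kills the generators $x'\cdot\alpha\otimes y-x'\otimes\alpha\cdot y$; this data is provided by the inclusions $M',M\hookrightarrow B(\mathcal{H})$, which commute since $M'$ is the commutant of $M$ and are compatible with the $\mathfrak{A}$-action because both are implemented by $\rho$. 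Restricting the composition to $M'\odot_{\mathfrak{A}}E$ recovers $\hat{u}$ as a cb contraction.

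For the forward direction I would proceed in three steps. First, using injectivity of $B(\mathcal{H})$ as an operator space, apply the Arveson--Wittstock theorem to extend the given cb contractive $\hat{u}$ on $M'\otimes^{\rm max}_{\mathfrak{A}}E$ to a cb contraction $\Psi:M'\otimes_{\mathfrak{A}-\rm max}A\to B(\mathcal{H})$. Second, invoke Haagerup's theorem for maps into the injective von Neumann algebra $B(\mathcal{H})$ to obtain $\|\Psi\|_{\rm dec}=\|\Psi\|_{\rm cb}\le 1$ and to write $\Psi=\Psi_1-\Psi_2+i(\Psi_3-\Psi_4)$ with cp components $\Psi_k$ of norm at most $1$. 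Third, average each $\Psi_k$ (over the unitary group of $M'$, and if needed over $\mathfrak{A}$ via an invariant mean) to replace it by a cp map of the same norm bound that is both left-and-right $M'$-modular and $\mathfrak{A}$-bimodular. Then $\tilde{u}_k(a):=\Psi_k(1\otimes a)$ takes values in $M''=M$ by $M'$-bimodularity and is an $\mathfrak{A}$-module cp map $A\to M$, so that $\tilde{u}:=\tilde{u}_1-\tilde{u}_2+i(\tilde{u}_3-\tilde{u}_4)\in D_{\mathfrak{A}}(A,M)$ extends $u$ with $\|\tilde{u}\|_{\mathfrak{A}-\rm dec}\le 1$.

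The hard part is the averaging step. Neither Arveson--Wittstock nor Haagerup's cb$=$dec identity produces module maps automatically, and without bimodularity of each $\Psi_k$ there is no way to force $\tilde{u}_k(A)\subseteq M$ (this is precisely where the requirement $\ddot u\in D_{\mathfrak A}(A^{**},M)$ from property $(vi)$ above could be exploited). The averaging is immediate when $\mathfrak{A}$ admits an invariant mean or when the unitary group of $M'$ is amenable; in the general case I would expect to need a module-refined Wittstock theorem yielding an $M'$-bimodule extension directly, or alternatively a Stinespring dilation analysis that exploits the hypothesised commutativity of the $\mathfrak{A}$-action on $A$ to transfer bimodularity from the dilation space down to $B(\mathcal{H})$.
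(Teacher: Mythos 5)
Your backward implication matches the paper's argument (Proposition \ref{61} applied with $C=M'$, followed by the product $*$-homomorphism $M'\otimes_{{\mathfrak A}-{\rm max}}M\to B(\mathcal{H})$), so that half is fine. The genuine gap is exactly where you flagged it: the forward direction cannot be completed by averaging. The unitary group of $M'$ is not amenable in general, $\mathfrak{A}$ is a $C^*$-algebra with no invariant mean to average over, and even where averaging is available it would be applied to non-canonical c.p.\ pieces $\Psi_k$ of an Arveson--Wittstock extension, so there is no reason the averaged combination still extends $u$ on $1\odot_{\mathfrak A}E$. Moreover, Haagerup's ${\rm cb}={\rm dec}$ identity controls $\max\{\|S_1\|,\|S_2\|\}$ for the corners of a single c.p.\ $2\times 2$ matrix, not the norms of four separate c.p.\ summands, so the quantitative bound $\le 1$ is already lost at your second step.

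The missing idea is that bimodularity comes for free from the multiplicative domain, not from averaging. The paper (following Pisier) passes to the $2\times 2$ matrix map $W:\mathcal{E}\to\mathbb{M}_2(B(\mathcal{H}))$ built from $\hat u$, where $\mathcal{E}=M'\otimes^{\rm max}_{\mathfrak A}E$, whose restriction to the diagonal algebra $\mathcal{D}$ of matrices $\operatorname{diag}(c_1,c_2)$ with $c_i\in C:=M'\odot_{\mathfrak A}1$ is a $*$-homomorphism. A u.c.p.\ extension $\tilde W$ of $W$ to $\mathbb{M}_2(M'\otimes_{{\mathfrak A}-{\rm max}}A)$ then automatically satisfies $\tilde W(y_1xy_2)=W(y_1)\tilde W(x)W(y_2)$ for $y_1,y_2\in\mathcal{D}$ by \cite[Corollary 5.3]{P} (Choi's multiplicative domain argument). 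Testing this against the diagonal matrix units gives the coordinate form $T(x)=(T_{ij}(x_{ij}))$ of the restriction $T$ to $\mathbb{M}_2(1\odot_{\mathfrak A}A)\simeq\mathbb{M}_2(A)$; testing against $\operatorname{diag}(m'\otimes 1,m'\otimes 1)$ shows each $T_{ij}$ commutes with $M'$ and hence takes values in $M''=M$; and $T_{12}$ extends $u$ with $\|\tilde u\|_{{\mathfrak A}-{\rm dec}}\le\max\{\|T_{11}\|,\|T_{22}\|\}=1$. The $\mathfrak{A}$-modularity is inherited from the module structure of $\hat u$ together with the standing hypothesis that $A$ is a commutative $\mathfrak{A}$-bimodule (so $A\simeq 1\odot_{\mathfrak A}A$), again with no averaging. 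This is the mechanism your ``module-refined Wittstock theorem'' should be replaced by.
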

\begin{proof}
Suppose that there is an extension $ \tilde{u}\in  D_{\mathfrak A}(A,B) $ with $\|\tilde{u}  \|_{{\mathfrak A}-{\rm dec}}\leq1.  $ By  (\ref{6.1}) and (\ref{6.11}) we have  
$$\|{\rm id}_{M^\prime}\otimes_{{\mathfrak A}-{\rm max}} \tilde{u}: M^\prime\otimes_{{\mathfrak A}-{\rm max}}A\rightarrow M^\prime\otimes_{{\mathfrak A}-{\rm max}}M\|_{c.b.}\leq 1. $$
Therefore,
$$\|{\rm id}_{M^\prime}\otimes_{{\mathfrak A}-{\rm max}} u:M^\prime\otimes^{{\rm max}}_{ \mathfrak A}E\rightarrow M^\prime\otimes_{{\mathfrak A}-{\rm max}}M\|_{c.b.}\leq 1.$$
Since the product map $ p:M^\prime\odot_{\mathfrak A}M\rightarrow  B(\mathcal{H}) $ defines a *-homomorphism on $  M^\prime\otimes_{{\mathfrak A}-{\rm max}}M $ into $  B(\mathcal{H}) $
 with  c.b. norm at most one and $\hat{u}=p({\rm id}_{M^\prime}\odot_{\mathfrak A}u) $, we get $  \|\hat{u}:  M^\prime\otimes^{{\rm max}}_{ \mathfrak A}E\rightarrow  B(\mathcal{H})\|_{c.b.}\leq 1 $.

Conversely, assume that $  \|\hat{u}:  M^\prime\otimes^{{\rm max}}_{ \mathfrak A}E\rightarrow  B(\mathcal{H})\|_{c.b.}\leq1 $. Let $\mathcal{K} $ be a  Hilbert space such that we have faithful representation 
$ M^\prime\otimes_{{\mathfrak A}-{\rm max}}A\subseteq  B(\mathcal{K}) $.
Let $ C= M^\prime\odot_{\mathfrak A}1\subseteq B(\mathcal{K})$ and let $ \pi:C\rightarrow B(\mathcal{H}) $ be the map coming from the natural identification $  M^\prime\odot_{\mathfrak A}1\simeq M^\prime, $ which in turn follows from the fact that  $ B(\mathcal{H}) $ is a commutative $ \mathfrak A $-bimodule. 
Put $ \mathcal{E}=M^\prime\otimes^{{\rm max}}_{ \mathfrak A}E $
and note that $ \mathcal{E} $ is a $ C $-bimodule and $\hat{u}  $  is a $ C $-bimodule by \cite[Lemma 6.19]{P} applied to $ w=\hat{u}. $ In particular, we obtain a u.c.p. $ \mathfrak A $-module map
$ W:\mathcal{E} \rightarrow \mathbb{M}_2(B(\mathcal{H})) $ which admits a  u.c.p. extension 
$ \tilde{W}:\mathbb{M}_2(M^\prime\otimes_{{\mathfrak A}-{\rm max}}A  )\rightarrow \mathbb{M}_2(B(\mathcal{H})) $.
Let $ T $ be the restriction of $ \tilde{W} $ to $ \mathbb{M}_2(1\odot_{\mathfrak A}A) $.
Since $ A $ is a commutative $ \mathfrak A $-bimodule, we may identify $ A $ with $1\odot_{\mathfrak A}A  $ and view $ T $ as a mapping from $ \mathbb{M}_2(A) $ to $ \mathbb{M}_2(B(\mathcal{H})) $ with $ T(1)=\tilde{W}(1)=1 $. We claim that the $ \mathfrak A$-module map $ T $ has the following special form 

\begin{equation}\label{6.27}
	 \qquad T(x)=\begin{pmatrix}
		T_{11}(x_{11})& T_{12}(x_{12})\\
		T_{21}(x_{21})& T_{22}(x_{22})
	\end{pmatrix}\ \ (x \in \mathbb M_2(A))
\end{equation}
with $T_{12}\arrowvert_{E}=u  $ and  that $ T(\mathbb{M}_2(A))\subseteq \mathbb{M}_2(M) $.
Assuming the claim, one could conclude the proof as follows.
Since $ T $ is   u.c.p.,  $ \max \{\|	T_{11}\|,\|T_{22}  \| \}\leq\| T\|=1 $,
the maps $ 	T_{11},T_{22} $ are c.p. and moreover the $ \mathfrak A$-module map $ R:a\mapsto T(\begin{pmatrix}
	a& a\\
a& a
\end{pmatrix}) $
is c.p. on $ A $ and self-adjoint. Let $ \tilde{u}=T_{12} $ and 
$ \tilde{u}_*(a)=\tilde{u}(a^*)^*, $ for  $ a\in A. $
Thus
$$ R(a)=\begin{pmatrix}
	T_{11}(a)& \tilde{u}(a) \\
	\tilde{u}_*(a)& T_{22}(a)
\end{pmatrix}.
$$
for each  $ a\in A. $ Therefore, by definition of the $ \mathfrak A$-dec norm, 
$$ \|\tilde{u}\|_{ \mathfrak A-\rm dec} \leq\max \{\|	T_{11}\|,\|T_{22}  \| \}=1. $$
It  only remains to prove the claim. To this end, first observe that by  definition
$ W $ is a *-homomorphism on the algebra of matrices $ \begin{pmatrix}
	c_1& 0 \\
	0& c_2
\end{pmatrix} $
with $ c_1, c_2\in C $. Let $ \mathcal{D} $ denote the set of all such matrices. By \cite[Corollary 5.3]{P}, the map $ \tilde{W} $ must satisfy 
\begin{equation}\label{6.28}
\tilde{W}(y_1xy_2)=W(y_1)\tilde{W}(x)W(y_2)
\end{equation}
for all $ y_1, y_2\in \mathcal{D}$ and $ x\in \mathbb{M}_2(B(\mathcal{H})) $.
Applying this with $ y_1, y_2 $ equal to either $  \begin{pmatrix}
	1& 0 \\
	0& 0
\end{pmatrix} $ or $  \begin{pmatrix}
0& 0 \\
0& 1
\end{pmatrix}, $
one concludes that  $ \tilde{W}(x)_{ij} $ depends only on
$ x_{ij} $ and the same is true for $ T=\tilde{W}\arrowvert_{\mathbb{M}_2(A)} $.
Thus we can write a priori $ T $ in the form (\ref{6.27}). Moreover, since $ \tilde{W} $ extends
$ W $, we know $ \tilde{W}_{12}$ extends $ W_{12}=w=\hat{u} $, and hence restricting  this to 
$ A\simeq 1\odot_{\mathfrak A}A $, on which $\hat{u}=u  $, we conclude that $ \tilde{u} $ extends $ u. $

Finally, it remains to check that all $ T_{ij} $s take their values in $ M $, equivalently, it suffices to check that all the terms $  T_{ij}( x_{ij})$ with $x_{ij}\in A$ commute with 
$ M^\prime $. Since $ 1\odot_{\mathfrak A}A $ commutes with $ M^\prime \odot_{\mathfrak A}1$, each 
$ x\in \mathbb{M}_2(A) $ commutes with any $ y\in \mathcal{D} $ of the form
$y= \begin{pmatrix}
	m^\prime\otimes 1& 0 \\
	0& m^\prime\otimes 1
\end{pmatrix} $
with $ m^\prime\in M^\prime $, and hence by (\ref{6.28}), $$ W(y)\tilde{W}(x)=\tilde{W}(yx)=\tilde{W}(xy)=\tilde{W}(x)W(y). $$
Equivalently, $ W(y)T(x)=T(x)W(y) $ and since $ W(y)=\begin{pmatrix}
	m^\prime & 0 \\
	0& m^\prime 
\end{pmatrix} $, this implies that all elements $ T_{ij}(x_{ij}) $ commute with any $ m^\prime\in M^\prime $ and hence take their values in $ M $. This completes the proof of the claim.
\end{proof}
\begin{lem}\label{7.3}
 For   $ \mathfrak A$-bimodules $ B, C, $ and each $ x\in C\odot_{\mathfrak A}B$ we have
 $$\| x\|_{C\otimes_{\mathfrak A-\rm max}B}=\| x\|_{C\otimes_{\mathfrak A-\rm max}B^{**}}.  $$
  \end{lem}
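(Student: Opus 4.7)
My plan is to prove the equality via the representation-theoretic characterization of the $\mathfrak{A}$-$\max$ norm established in Theorem \ref{main1}: for $\gamma=\max$,
$$\|x+I\|_{\mathfrak{A}-\max} = \sup \|(\pi\times\sigma)(x)\|,$$
the supremum running over all pairs $(\pi,\sigma)$ of $\mathfrak{A}$-bimodule $*$-representations $\pi:C\to B(\mathcal{H})$ and $\sigma$ of the second tensor factor on a common Hilbert space $\mathcal{H}$, having commuting ranges and annihilating the relevant ideal, where $B(\mathcal{H})$ is given the $\mathfrak{A}$-bimodule structure induced by some $*$-homomorphism $\rho:\mathfrak{A}\to B(\mathcal{H})$. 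Applying this description to both $B$ and $B^{**}$ reduces the problem to setting up an essentially tautological norm-preserving correspondence between admissible pairs for $(C,B)$ and admissible pairs for $(C,B^{**})$ that acts as the identity on $C\odot B$.

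The inequality $\|x\|_{C\otimes_{\mathfrak A-\rm max}B^{**}}\le \|x\|_{C\otimes_{\mathfrak A-\rm max}B}$ is straightforward: the canonical inclusion $B\hookrightarrow B^{**}$ is an $\mathfrak{A}$-bimodule $*$-monomorphism and carries generators of $I_{C,B}$ into generators of $I_{C,B^{**}}$, so for any admissible pair $(\pi,\tilde\sigma)$ for $(C,B^{**})$ the restriction $\sigma:=\tilde\sigma|_B$ is admissible for $(C,B)$, and $\pi\times\tilde\sigma$ and $\pi\times\sigma$ agree on $C\odot B$; passing to suprema gives the inequality.

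For the reverse inequality I would start with an admissible pair $(\pi,\sigma)$ for $(C,B)$ and invoke the classical fact that $\sigma$ admits a unique normal $*$-representation $\tilde\sigma:B^{**}\to \sigma(B)''\subseteq B(\mathcal{H})$ extending $\sigma$ (after restricting to the essential subspace when $\sigma$ is degenerate). Three conditions must then be verified so that $(\pi,\tilde\sigma)$ is admissible. Commuting ranges is immediate because $\pi(C)\subseteq\sigma(B)'$ and commutants are SOT-closed, so $\pi(C)$ commutes with $\sigma(B)''=\tilde\sigma(B^{**})$. The bimodule identity $\tilde\sigma(\alpha\cdot\Phi)=\rho(\alpha)\tilde\sigma(\Phi)$ for $\Phi\in B^{**}$ follows by taking a bounded net $b_i\to\Phi$ in the $w^*$-topology and combining $w^*$-continuity (in the $B^{**}$-variable) of the canonical dual bimodule action with normality of $\tilde\sigma$, the right-module case being symmetric. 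Vanishing of $\pi\times\tilde\sigma$ on a generator $c\cdot\alpha\otimes\Phi-c\otimes\alpha\cdot\Phi$ of $I_{C,B^{**}}$ is then a direct computation: using the module properties of $\pi$ and of $\tilde\sigma$, both summands equal $\pi(c)\rho(\alpha)\tilde\sigma(\Phi)$. Since $\pi\times\tilde\sigma$ restricts to $\pi\times\sigma$ on $C\odot B$, passing to suprema completes the proof.

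The principal obstacle lies in the bimodule property of the normal extension. This rests on the $w^*$-continuity (in the $B^{**}$-variable) of the canonical bimodule action together with a careful passage to limits that switches between the $w^*$-topology on $B^{**}$ and the ultraweak topology on $B(\mathcal{H})$. A preliminary step is to confirm that $B^{**}$, with its canonical bimodule structure, genuinely satisfies axioms (iii)--(iv) of Section 1 (so that it qualifies as an $\mathfrak{A}$-bimodule in the sense of this paper); this is done by combining $w^*$-density of $B$ in $B^{**}$ with the separate $w^*$-continuity of multiplication in the von Neumann algebra $B^{**}$ to lift the identities from $B$ to $B^{**}$.
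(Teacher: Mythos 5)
Your proposal is correct and follows essentially the same route as the paper: one inequality via restriction of representations along $B\hookrightarrow B^{**}$, the other via the normal extension $\sigma^{**}:B^{**}\to\sigma(B)''\subseteq\pi(C)'$ of a commuting pair $(\pi,\sigma)$, noting $\pi\times\sigma=\pi\times\sigma^{**}$ on $C\odot B$. You merely spell out the module-compatibility and ideal-annihilation checks that the paper leaves implicit.
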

\begin{proof}
Let $ \pi:C\rightarrow B(\mathcal{H})$ and $ \sigma:B\rightarrow B(\mathcal{H}) $ be representations with commuting ranges and let $ \sigma^{**}:B^{**}\rightarrow \pi(C)^\prime $. By Theorem \ref{main1}, we have $$ \|\pi\times \sigma(x)\|_{B(\mathcal{H})}=\|\pi\times \sigma^{**}(x)\|_{B(\mathcal{H})}\leq\| x\|_{C\otimes_{\mathfrak A-\rm max}B^{**}},$$ whence $ \| x\|_{C\otimes_{\mathfrak A-\rm max}B}\leq\| x\|_{C\otimes_{\mathfrak A-\rm max}B^{**}} $. The converse is obvious.
\end{proof}
\begin{thm}
	Let $ E $ be a submodule of $ A $.
The  $ \mathfrak A$-module map $ u:E\rightarrow B $ is $\mathfrak{A}$-$(\rm max \to \rm max)$-tensorizing if and only if $ u $ admits a decomposable extension $ \tilde{u}:A\rightarrow B^{**} $ with $ \|\tilde{u}\|_{\mathfrak A-\rm dec} \leq1$ such that the following diagram commutes.
	
\begin{center}
	\begin{tikzpicture}
		\matrix [matrix of math nodes,row sep=1cm,column sep=1cm,minimum width=1cm]
		{
			|(A)| \displaystyle E  &   |(B)|  A    \\
			|(C)|  B    &   |(D)|  B^{**} \\
		};
		\draw[->]    (A)-- node [above] {$ {\rm id} $ }(B);
		\draw[->]   (A)--  node [left] { $ u $} (C);
		\draw[->]  (C)-- node [below]   { $i_B$}(D);
		\draw[->]  (B)-- node [right]  {$ \tilde{u} $}(D);
	\end{tikzpicture}
\end{center}

\end{thm}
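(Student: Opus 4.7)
The plan is to handle the two implications separately, using Proposition~\ref{61} together with Lemma~\ref{7.3} for the ``if'' direction, and Lemma~\ref{6.20} for the ``only if'' direction, after factoring $\hat u$ through an amplification of $u$ and a product $*$-homomorphism.

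For the ``if'' direction, assume $\tilde u\colon A\to B^{**}$ is decomposable with $\|\tilde u\|_{\mathfrak{A}-\mathrm{dec}}\le 1$ and $\tilde u|_E=i_B\circ u$. Given any $C^*$-algebra $C$ with compatible $\mathfrak{A}$-action and any $x\in E\odot_{\mathfrak{A}}C$, viewed canonically in $A\odot_{\mathfrak{A}}C$, Proposition~\ref{61} applied to $\tilde u$ yields
\[
\|({\rm id}_C\odot_{\mathfrak{A}}\tilde u)(x)\|_{C\otimes_{\mathfrak{A},\mathrm{max}}B^{**}}\le \|\tilde u\|_{\mathfrak{A}-\mathrm{dec}}\,\|x\|_{C\otimes_{\mathfrak{A},\mathrm{max}}A}\le\|x\|_{C\otimes_{\mathfrak{A},\mathrm{max}}A}.
\]
Since $\tilde u$ agrees with $i_B\circ u$ on $E$, the element $({\rm id}_C\odot_{\mathfrak{A}}\tilde u)(x)$ coincides with $({\rm id}_C\odot_{\mathfrak{A}}u)(x)\in C\odot_{\mathfrak{A}}B$, and Lemma~\ref{7.3} matches its max-module norms computed in $B$ and in $B^{**}$. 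This delivers precisely the contractive bound required by Definition~\ref{key}.

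For the ``only if'' direction, fix a faithful normal representation $B^{**}\hookrightarrow B(\mathcal H)$, set $M:=B^{**}$ so that $B\subseteq M\subseteq B(\mathcal H)$, and equip $B(\mathcal H)$ with the $\mathfrak{A}$-bimodule structure determined by a $*$-representation $\rho\colon\mathfrak{A}\to B(\mathcal H)$ as in the paragraph preceding Lemma~\ref{6.20}; then $M'$ is a closed $\mathfrak{A}$-submodule of $B(\mathcal H)$. I would invoke Lemma~\ref{6.20} applied to $u\colon E\to M$, which reduces the task to showing that $\|\hat u\colon M'\otimes_{\mathfrak{A}}^{\mathrm{max}}E\to B(\mathcal H)\|_{\mathrm{c.b.}}\le 1$. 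The natural factorization is
\[
\hat u\;=\;p\circ({\rm id}_{M'}\odot_{\mathfrak{A}}u),
\]
where $p\colon M'\odot_{\mathfrak{A}}M\to B(\mathcal H)$ is the product map; because $M'$ and $M$ commute and both carry the $\rho$-induced $\mathfrak{A}$-action, $p$ extends to a $*$-homomorphism on $M'\otimes_{\mathfrak{A},\mathrm{max}}M$, hence is completely contractive. Testing the $(\mathrm{max}\to\mathrm{max})$-tensorizing hypothesis with $C:=M'$ makes ${\rm id}_{M'}\odot_{\mathfrak{A}}u$ a contraction from $M'\otimes_{\mathfrak{A}}^{\mathrm{max}}E$ into $M'\otimes_{\mathfrak{A},\mathrm{max}}B$, which by Lemma~\ref{7.3} embeds isometrically on elementary tensors into $M'\otimes_{\mathfrak{A},\mathrm{max}}M$. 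Composing with $p$ completes the estimate, so Lemma~\ref{6.20} supplies a decomposable $\tilde u\in D_{\mathfrak{A}}(A,B^{**})$ with $\|\tilde u\|_{\mathfrak{A}-\mathrm{dec}}\le 1$; commutativity of the diagram is precisely the extension condition $\tilde u|_E=i_B\circ u$.

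The main obstacle is that the tensorizing hypothesis in Definition~\ref{key} is a plain norm statement while Lemma~\ref{6.20} demands a completely bounded estimate. I would overcome this by applying Definition~\ref{key} with $C$ replaced by its matrix amplifications $\mathbb M_n(M')$, equipped with the componentwise $\mathfrak{A}$-action, which upgrades the contraction ${\rm id}_{M'}\odot_{\mathfrak{A}}u$ to a complete contraction at every matrix level. Once this matricial refinement is in hand, every remaining step is a formal assembly of the results already established in the preceding sections.
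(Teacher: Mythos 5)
Your proposal is correct and follows essentially the same route as the paper: Proposition~\ref{61} plus Lemma~\ref{7.3} for the ``if'' direction, and for the ``only if'' direction the factorization $\hat u=p\circ({\rm id}_{M'}\odot_{\mathfrak A}u)$ through the product $*$-homomorphism followed by Lemma~\ref{6.20} applied with $M=B^{**}$. Your explicit upgrade of the norm bound to a completely bounded one via the amplifications $\mathbb M_n(M')$ is a welcome refinement, since the paper asserts the c.b.\ estimate on ${\rm id}_{M'}\odot_{\mathfrak A}u$ without comment.
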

\begin{proof}
Assume we have an extension $ \tilde{u} \in D_{\mathfrak A}(A,B^{**}) $ with $ \|\tilde{u}\|_{\mathfrak A-\rm dec} \leq1$. By Proposition \ref{61}, $\tilde{u}  $ and its restriction $ i_Bu $ are $\mathfrak{A}$-$(\rm max \to \rm max)$-tensorizing. By Lemma \ref{7.3}, the map $ u $ itself is  $\mathfrak{A}$-$(\rm max \to \rm max)$-tensorizing.

Conversely, assume that $ u $  is  $\mathfrak{A}$-$(\rm max \to \rm max)$-tensorizing.
We put $ C=M^\prime $ and apply Theorem \ref{6.20}. Let $ M=B^{**} $ be viewed as a von Neumann algebra embedded in $ B(\mathcal{H}), $ for some $ \mathcal{H}, $ such that $ B\subseteq B^{**}\subseteq B(\mathcal{H}) $. Let $ \hat{u}: M^\prime\otimes^{{\rm max}}_{ \mathfrak A}E\rightarrow  B(\mathcal{H})$ be as in Theorem  \ref{6.20}. Then we have 
$$ \|{\rm id}_{M^\prime}\odot_{\mathfrak{A}}u:M^\prime \otimes^{{\rm max}}_{ \mathfrak A}E\rightarrow M^\prime\otimes_{\mathfrak{A}-\rm max}B\|_{c.b.}\leq 1.     $$
Next, $\hat{u}  $ is the composition
of the map $ {\rm id}_{M^{'}}\odot_{\mathfrak{A}}u $ with  *-homomorphism 
$ \sigma:M^\prime \otimes_{\mathfrak{A}-\rm max}B\rightarrow B(\mathcal{H}) $ defined by $ \sigma(c\otimes b)=cb=bc $. Therefore, $$  \|\hat{u}:  M^\prime\otimes^{{\rm max}}_{ \mathfrak A}E\rightarrow  B(\mathcal{H})\|_{c.b.}\leq 1. $$ Now applying Theorem  \ref{6.20} to the map $ i_Bu:E\rightarrow M=B^{**} $ shows that there is an extension $ \tilde{u} \in D_{\mathfrak A}(A,B^{**}) $ with $ \|\tilde{u}\|_{\mathfrak A-\rm dec} \leq1$.

\end{proof}

\section*{Acknowledgment}

The author thanks to Professor Massoud Amini for his valuable comments and supports.

\section*{Compliance with Ethical Standards}

Funding: Not applicable.

Conflict of Interest: The authors declare that they have no known competing financial interests or personal relationships that could have appeared to influence the work reported in this paper.

Ethical Conduct: Not applicable.

Data Availability Statements: All input data for the simulations conducted in this study are available within the article.


\end{document}